\documentclass[11pt,oneside]{amsart}
\usepackage{amsmath,ifthen, amsfonts, amssymb, srcltx,
   amsopn, textcomp}

\usepackage{hyperref}
\usepackage{graphicx}
\usepackage{overpic}
\usepackage[small]{caption}

\newcommand{\showcomments}{yes}
\renewcommand{\showcomments}{no}

\newcommand{\hidetodo}[1]
{\ifthenelse{\equal{\showcomments}{yes}}%
% then end comment
{#1}
% else do nothing
}

\newsavebox{\commentbox}
\newenvironment{com}%
% begin comment
{\ifthenelse{\equal{\showcomments}{yes}}%
% then begin comment in margin
{\footnotemark
        \begin{lrbox}{\commentbox}
        \begin{minipage}[t]{1.25in}\raggedright\sffamily\tiny
        \footnotemark[\arabic{footnote}]}
% else eat contents of the environment
{\begin{lrbox}{\commentbox}}}%
% end comment
{\ifthenelse{\equal{\showcomments}{yes}}%
% then end comment
{\end{minipage}\end{lrbox}\marginpar{\usebox{\commentbox}}}
% else finish eating
{\end{lrbox}}}

\newtheorem{thm}{Theorem}[section]
\newtheorem{lem}[thm]{Lemma}

\newtheorem{cor}[thm]{Corollary}

\newtheorem{prop}[thm]{Proposition}

\newtheorem*{thmA}{Theorem~\ref{thm:bounded packing abelian cubical}}

\newtheorem*{thmC}{Theorem~\ref{thm:Central HNN virtually special}}
\newtheorem*{thmD}{Theorem~\ref{thm:cocompact cubical flat}}

\newtheorem*{GenericExmp}{Example~\ref{exmp:generic}}

\theoremstyle{definition}
\newtheorem{defn}[thm]{Definition}
\newtheorem{rem}[thm]{Remark}
\newtheorem{exmp}[thm]{Example}

\newtheorem{prob}[thm]{Problem}

\newcommand{\neb}{\mathcal N}

\DeclareMathOperator{\dimension}{dim}

\DeclareMathOperator{\rank}{rank}

\DeclareMathOperator{\link}{link}

\DeclareMathOperator{\stabilizer}{Stabilizer}

\newcommand{\dist}{\textup{\textsf{d}}}

\newcommand{\field}[1]{\mathbb{#1}}
\newcommand{\integers}{\ensuremath{\field{Z}}}

\newcommand{\naturals}{\ensuremath{\field{N}}}
\newcommand{\reals}{\ensuremath{\field{R}}}

\newcommand{\Euclidean}{\ensuremath{\field{E}}}

\newcommand{\boundary}   {{\ensuremath \partial}}

\newcommand{\canon}[2]{\ensuremath{{{\sf C}({#1}\rightarrow {#2})}}}

\DeclareMathOperator{\diameter}{\text{diam}}

\newcommand{\hull}{\text{\sf hull}}

%%%%%%%%%%%%%DIMENSIONS%%%%%%%%%%%%%
\setlength{\textwidth}{5.5in}
\setlength{\textheight}{8.15in}
\hoffset=-.27in
%%%%%\voffset=.15 in
%%%%%%%%%%%%%DIMENSIONS%%%%%%%%%%%%%

\begin{document}

\title{A cubical flat torus theorem and the bounded packing property}
\author[D.~T.~Wise]{Daniel T. Wise}
           \address{Dept. of Math. \& Stats.\\
                    McGill Univ. \\
                    Montreal, QC, Canada H3A 0B9 }
           \email{wise@math.mcgill.ca}
\author{Daniel J. Woodhouse}
\email{daniel.woodhouse@mail.mcgill.ca}
\subjclass[2010]{20F67, 20F65}
\keywords{CAT(0) cube complexes, Bounded Packing, Flat Torus Theorem}
\date{\today}
\thanks{Research supported by NSERC and the second author by Hydro Quebec.}
\maketitle

\begin{com}
{\bf \normalsize COMMENTS\\}
ARE\\
SHOWING!\\
\end{com}
\begin{abstract}
We prove the bounded packing property for any abelian subgroup of a group acting properly and cocompactly on a CAT(0) cube complex.
A main ingredient of the proof is a cubical flat torus theorem. This ingredient is also used to show that central HNN extensions of maximal free-abelian subgroups of compact special groups are virtually special, and to produce various examples of groups that are not cocompactly cubulated.\end{abstract}

\section{Introduction}

Let $G$ be a finitely generated group, and let $\Upsilon$ be its Cayley graph with respect to some finite generating set.
A subgroup $H \leqslant G$ has \emph{bounded packing in $G$} if for each $r>0$ there exists $m=m(r)$ such that if $g_1H,\ldots, g_mH$ are distinct left cosets of $H$,
then there exists $i,j$ such that $\dist_\Upsilon(g_ih, g_jh')>r$ for all $h,h'\in H$.

The motivating goal of this article is to prove the following:

\begin{thmA}
Let $G$ act properly and cocompactly on a CAT(0) cube complex $\widetilde X$.
Let $A$ be an abelian subgroup of $G$. Then $A$ has bounded packing in $G$.
\end{thmA}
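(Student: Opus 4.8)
The plan is to replace each coset by a canonical convex subcomplex, reformulate closeness of cosets as closeness of these subcomplexes, and then bound the number of pairwise-close subcomplexes using the cubical flat torus theorem to supply the cores and properness of the action to finish the count. By the contrapositive of the definition, it suffices to show that for each $r$ the size of any family of cosets that is pairwise $r$-close is bounded.

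\emph{Reductions.} Since $G$ acts properly and cocompactly on a CAT(0) cube complex it is a CAT(0) group, so every abelian subgroup is finitely generated and virtually $\integers^n$; as bounded packing is preserved under passing to finite-index subgroups and overgroups, I would first reduce to the case $A\cong\integers^n$. Fixing a base vertex $x_0\in\widetilde X$, the orbit map identifies the word metric on $G$ with the combinatorial metric $\dist$ on $\widetilde X$ up to quasi-isometry, so coset-closeness can be measured by $\dist$ between orbits.

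\emph{Cores.} Next I would invoke the cubical flat torus theorem to obtain an $A$-invariant convex subcomplex $Y=\hull(A)$ on which $A$ acts cocompactly, quasi-isometric to $\Euclidean^n$ and meeting only finitely many $A$-orbits of hyperplanes. Each coset $gA$ is then assigned the convex subcomplex $gY$, and cocompactness of the $A$-action gives a bounded Hausdorff distance between the orbit $gAx_0$ and $gY$; hence $gA$ is $r$-close to $g'A$ exactly when $gY$ and $g'Y$ come within some distance $C=C(r)$. A pairwise $r$-close family of cosets thus becomes a pairwise $C$-close family of translates of the flat $Y$, and I want to bound the number of distinct such translates.

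\emph{Counting.} In a CAT(0) cube complex the number of hyperplanes separating two disjoint convex subcomplexes equals the combinatorial distance between them, so any two cores in the family are separated by at most $C$ hyperplanes. Two mechanisms must be controlled. When the cores are parallel (they cross the same hyperplanes) they are stacked within transverse distance $C$, and local finiteness of $\widetilde X$ together with properness of the action bounds how many such translates fit in a $C$-neighbourhood. When the cores are transverse, the flat geometry should force them to diverge, so they can remain $C$-close only over a region of bounded diameter. In both cases I would aim to localize the entire family to a single ball $B$ of radius $R=R(C)$ met by every core; after reducing each representative by the cocompact $A$-action so that its nearby point lands in a fixed fundamental domain, the finitely many elements of $G$ carrying $x_0$ into $B$ account for all the cosets, giving the bound $m(r)$.

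\emph{Main obstacle.} The hard part will be the localization in the transverse and mixed cases, since pairwise closeness does not by itself produce a common bounded region: CAT(0) cube complexes satisfy no exact Helly property for metric neighbourhoods of convex sets. I expect to need a coarse Helly (median) argument to extract a uniform $R(C)$-center for the family, reinforced by the combinatorial flat structure coming from the flat torus theorem to rule out long fellow-travelling of non-parallel cores. Once the family is localized, the finitely many $A$-orbits of hyperplanes crossing $Y$ and properness of the $G$-action turn the geometric picture into a finite count, yielding the uniform bound and hence bounded packing of $A$.
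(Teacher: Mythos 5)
Your overall architecture---replace cosets by convex cocompact cores supplied by the cubical flat torus theorem, translate coset-closeness into core-closeness, and then count---is the paper's architecture, but there are two genuine gaps. First, Theorem~\ref{thm:cocompact cubical flat} applies only to \emph{highest} virtually abelian subgroups, and this hypothesis cannot be dropped: for a non-highest $A$ (e.g.\ a ``diagonal'' $\integers$ inside $\integers^2$ acting on the standard cubulation of $\reals^2$) the hull of an $A$-invariant flat need not be $A$-cocompact---that is exactly alternative~(\ref{poss1}) of Theorem~\ref{thm:hullTheorem}. Your reduction only passes to a finite-index free-abelian subgroup, which does not make $A$ highest, so the step ``obtain an $A$-invariant convex subcomplex $Y=\hull(A)$ on which $A$ acts cocompactly'' fails in general. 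You must first enlarge $A$ to a highest virtually abelian subgroup $A'$ containing a finite-index subgroup of $A$ (this exists by finite dimensionality), prove bounded packing for $A'$, and then descend to $A$ via Lemma~\ref{lem:finite index subnormal} (normal subgroups and commensurable subgroups inherit bounded packing).

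Second, the counting step you flag as the ``main obstacle'' is genuinely unresolved in your write-up, and your pessimism about Helly is misplaced: the localization you want is exactly what the paper's two standing lemmas deliver. By Lemma~\ref{lem:r thickening} there is, for each $r$, a \emph{convex} subcomplex $\widetilde Y^{+r}$ with $\neb_r(\widetilde Y)\subset \widetilde Y^{+r}\subset \neb_s(\widetilde Y)$; a pairwise $r$-close family of cosets therefore yields a pairwise \emph{intersecting} family of convex subcomplexes $\{g_i\widetilde Y^{+r}\}$, and the exact Helly property for convex subcomplexes (Proposition~\ref{prop:helly}) produces a single common point. Since $\widetilde Y^{+r}$ is cocompact for its stabilizer, which contains $A'$ with finite index, the collection $\{g_i\widetilde Y^{+r}\}$ is uniformly locally finite and the number of cosets through that point is bounded. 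No case analysis of parallel versus transverse cores and no coarse median argument is needed. Without these two ingredients your proposal does not yet constitute a proof.
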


Since Theorem~\ref{thm:bounded packing abelian cubical} is limited to the setting of CAT(0) cube complexes, it offers no direction towards resolving the following problems:
\begin{prob}
\hspace{1cm}\vspace{-.3cm}\newline
\noindent \begin{enumerate}
\item Let $G$ act properly and cocompactly on a CAT(0) space. Does each [cyclic] abelian subgroup $A\leqslant G$ have bounded packing?

\item Let $G$ be an aTmenable group. Does each abelian subgroup $A\leqslant G$ have bounded packing?

\item Find a finitely generated group $G$ with an infinite cyclic subgroup $A \leqslant G$ that does not have bounded packing.
    \end{enumerate}
\end{prob}

The \emph{rank} of a virtually abelian group $A$ is the rank of any finite index free-abelian subgroup of $A$.
A virtually abelian subgroup $A \leqslant G$ is \emph{highest} if $A$ does not have a finite index subgroup that lies in a virtually abelian subgroup of higher rank.
The particular feature of CAT(0) cube complexes used to prove Theorem~\ref{thm:bounded packing abelian cubical} is Theorem~\ref{thm:hullTheorem}, which is the crux of this paper.
A neat consequence of it is the following Cubical Flat Torus Theorem which asserts that
a highest abelian subgroup acts on a product of quasilines.
A \emph{cubical quasiline} is a CAT(0) cube complex that is quasi-isometric to $\mathbb{R}$.
For brevity we will simply refer to cubical quasilines as \emph{quasilines}.
\begin{thmD}
Let $G$ act properly and cocompactly on a CAT(0) cube complex $\widetilde X$.
Let $A$ be a highest virtually abelian subgroup of $G$ and let $p=\rank(A)$.
Then $A$ acts properly and cocompactly on a convex subcomplex $\widetilde Y \subseteq \widetilde X$ such that $\widetilde Y \cong \prod_{i=1}^p C_i$ where each $C_i$ is a quasiline.
\end{thmD}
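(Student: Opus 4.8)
The plan is to combine the cubical hull theorem (Theorem~\ref{thm:hullTheorem}) with the canonical irreducible factorisation of CAT(0) cube complexes and with cubical rank rigidity. The hull theorem is doing all the analytic work; what remains is to extract the product-of-quasilines structure from an abstract cocompact $\integers^p$-action.

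First I would reduce to the free abelian case. Pick a characteristic finite-index subgroup $A_0 \leqslant A$ isomorphic to $\integers^p$; it is normal in $A$ and is again highest of rank $p$. Theorem~\ref{thm:hullTheorem} provides a convex subcomplex $\widetilde Y \subseteq \widetilde X$ on which $A_0$ acts cocompactly, and since the hull is canonically determined by $A_0$ it is preserved by the normaliser $N_G(A_0) \supseteq A$. The $A$-action on $\widetilde Y$ is proper, being the restriction of the proper $G$-action, and $A_0$-cocompactness upgrades to $A$-cocompactness because $[A:A_0]<\infty$. A convex subcomplex is itself a CAT(0) cube complex, and since $\widetilde Y$ is built from the translation directions of $A_0$ it carries no bounded factors, so I may assume the $A$-action is essential.

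Next I would split $\widetilde Y$ into irreducible factors. Every finite-dimensional CAT(0) cube complex has a canonical decomposition $\widetilde Y \cong F_1 \times \cdots \times F_k$ into irreducibles, on which $\Aut(\widetilde Y)$ acts by permuting isomorphic factors; after passing to a further finite-index subgroup of $A$ (harmless, by the same transfer argument as above) I may assume $A$ preserves each $F_j$. Projecting the action yields abelian groups $A_j \leqslant \Aut(F_j)$, and projecting a compact fundamental domain shows each $F_j$ is $A_j$-cocompact. Because $A$ is abelian, after a final finite-index passage the action splits along the factors, so that each $A_j$ acts properly and cocompactly on $F_j$.

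It then remains to identify each $F_j$ as a quasiline. Here I would invoke rank rigidity for CAT(0) cube complexes (Caprace--Sageev): since $A_j$ acts cocompactly and essentially on the irreducible complex $F_j$, either $F_j$ is a point or $A_j$ contains an element $g$ acting as a rank-one isometry. A rank-one isometry has virtually cyclic centraliser, and $A_j$, being abelian, centralises $g$; hence $A_j$ is virtually cyclic. A finite $A_j$ would force $F_j$ to be a point, absorbed into the remaining factors, so each surviving $A_j \cong \integers$ acts properly and cocompactly on $F_j$, making $F_j$ quasi-isometric to $\integers \cong \reals$, i.e.\ a quasiline. The product of the surviving factors is then quasi-isometric to a power of $\reals$, and since $A \cong \integers^p$ acts on it properly and cocompactly there must be exactly $p$ of them; setting $C_i := F_i$ gives $\widetilde Y \cong \prod_{i=1}^p C_i$ as required.

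The genuine depth lies in Theorem~\ref{thm:hullTheorem}: cocompactness of the hull is precisely where the ``highest'' hypothesis is indispensable, since a lower-rank abelian subgroup sitting inside a larger flat would have a hull that is convex but far from cocompact. Within the present deduction the step I expect to be most delicate is ruling out irreducible factors of rank at least two, that is, making the rank-one centraliser argument precise — including the properness of the factor actions of $A_j$ on $F_j$ — together with the bookkeeping needed to move freely between $A$ and its finite-index free abelian subgroups while keeping $\widetilde Y$ invariant.
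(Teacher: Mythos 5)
There is a genuine gap at the very first substantive step: you cite Theorem~\ref{thm:hullTheorem} as ``providing a convex subcomplex $\widetilde Y$ on which $A_0$ acts cocompactly,'' but that theorem is a \emph{dichotomy}, not an unconditional cocompactness statement. Its hypotheses make no mention of ``highest,'' and its conclusion is: either $\hull(E)$ is cocompact and already a product of quasilines, or there is a finite-index subgroup $B\leqslant A$ with $\min(B)\cap\hull(E)$ not $B$-cocompact. Your proof never rules out the second alternative. You gesture at the right intuition in your closing paragraph (a non-highest subgroup sits in a larger flat and has a non-cocompact hull), but the converse direction --- that non-cocompactness of $\min(B)\cap\hull(E)$ forces $A$ to fail to be highest --- is exactly the missing argument, and it is the bulk of the paper's proof. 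The paper writes $\min(B)=V\times F$ with $\diameter(V)=\infty$, shows there are only finitely many $G$-orbits of the $B$-cocompact subcomplexes $N(\{v\}\times F)$ (by bounding the number of $B$-orbits of cells uniformly in $v$), finds $g\in G$ carrying one such subcomplex to a disjoint one, and concludes that $\langle g,B\rangle$ is virtually abelian of rank $p+1$, contradicting that $A$ is highest. Without some version of this step your argument does not use the ``highest'' hypothesis in any load-bearing way, and the theorem is false without it (e.g.\ a $\integers$ inside a cubulated $\integers^2$ has a hull that is all of $\reals^2$).

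Secondarily, the machinery you deploy after asserting cocompactness --- the canonical irreducible factorisation plus Caprace--Sageev rank rigidity --- is not needed if you use Theorem~\ref{thm:hullTheorem} as stated, since its first alternative already hands you $\hull(E)\cong\prod_{i=1}^p C_i$ with each $C_i$ a quasiline (the $C_i$ arise as duals to the alignment classes of hyperplanes crossing the flat). As a stand-alone route from ``cocompact $\integers^p$-action on a convex subcomplex'' to ``product of quasilines'' your sketch is plausible but has its own unresolved delicacies, which you yourself flag: the projected actions $A_j\curvearrowright F_j$ need not be proper or even discrete, and the virtually-cyclic-centraliser argument for a rank-one element requires care in that generality. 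So even granting the cocompactness you assumed, the second half would need real work --- work the paper avoids entirely by proving the product structure inside Theorem~\ref{thm:hullTheorem} itself.
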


We also present the following application of Theorem~\ref{thm:bounded packing abelian cubical}:

\begin{thmC}
Let $H$ be a finitely generated virtually $[$compact$]$ special group.
Let $A\subset H$ be a highest abelian subgroup.
Let $G=H*_{A^t=A}$ be the HNN extension, where $t$ is the stable letter commuting with $A$,
then $G$ is virtually $[$compact$]$ special.
\end{thmC}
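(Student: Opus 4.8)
The plan is to first recast the central HNN extension as an amalgam. Since $t$ centralizes $A$, the subgroup $\langle A, t\rangle$ is isomorphic to $A \times \mathbb{Z}$, and comparing presentations shows $G \cong H *_A (A \times \mathbb{Z})$: an amalgam of two virtually compact special groups over the highest abelian subgroup $A$. Because virtual specialness is inherited from any finite-index overgroup, I would pass to finite-index subgroups and reduce to the case where $H$ is compact special, acting freely and cocompactly on a CAT(0) cube complex $\widetilde X$ with $X = H \backslash \widetilde X$ special, and where $A$ is a torsion-free highest abelian subgroup of rank $p$. Theorem~\ref{thm:cocompact cubical flat} then furnishes a convex subcomplex $\widetilde Y \subseteq \widetilde X$ with $\widetilde Y \cong \prod_{i=1}^p C_i$ on which $A$ acts properly and cocompactly; this convexity is the geometric input that makes the edge inclusion well behaved.

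Next I would build a CAT(0) cube complex $\widetilde Z$ carrying a proper cocompact $G$-action, assembled as a tree of cube complexes over the Bass--Serre tree of the HNN extension. The vertex spaces are copies of $\widetilde X$ (one $G$-orbit), the edge spaces are copies of $\widetilde Y$ (one $G$-orbit), and every edge map is the convex inclusion $\widetilde Y \hookrightarrow \widetilde X$ supplied by Theorem~\ref{thm:cocompact cubical flat}. The stable letter $t$ acts as a translation along an axis in the tree, so $\langle A, t\rangle \cong A \times \mathbb{Z}$ stabilizes the corresponding tube, a product $\widetilde Y \times C_{p+1} \cong \prod_{i=1}^{p+1} C_i$ with $C_{p+1}$ a quasiline; this is exactly the rank-$(p+1)$ flat that Theorem~\ref{thm:cocompact cubical flat} predicts for the highest abelian subgroup $\langle A,t\rangle \leqslant G$. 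Since one is gluing CAT(0) cube complexes along isometrically embedded convex subcomplexes, the links remain flag, so $\widetilde Z$ is a simply connected nonpositively curved, hence CAT(0), cube complex. Properness combines the proper actions on the vertex and edge spaces with the action on the tree, and cocompactness is immediate as there is one orbit of vertex spaces and one of edge spaces, each acted on cocompactly.

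It then remains to upgrade this cubulation to virtual specialness by verifying, after passing to a suitable finite cover, that no hyperplane is one-sided, self-intersecting, or self-osculating, and that no two hyperplanes inter-osculate. The hyperplanes of $\widetilde Z$ split into those lying in a single vertex space, those crossing $\widetilde Y$ and hence spanning several vertex spaces, and the tree-dual hyperplanes whose stabilizers are the cosets of $A$. The first two families are governed by the specialness of $X$ together with the product decomposition of $\widetilde Y$, which records precisely which hyperplanes cross the edge spaces. The delicate pathologies are self-osculation of the tree-dual hyperplanes and inter-osculation between tree-dual and vertex-space hyperplanes; convexity of $\widetilde Y$ (Theorem~\ref{thm:cocompact cubical flat}) prevents a vertex-space hyperplane from simultaneously crossing and osculating an edge space, while separability of $A$ in $H$ allows one to pass to a finite cover that separates the offending osculations.

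The main obstacle I anticipate is this last step: producing a single finite cover that simultaneously removes self-osculation of the tree-dual hyperplanes and all inter-osculations introduced by the gluing, while preserving the graph-of-spaces structure. This is where bounded packing enters: Theorem~\ref{thm:bounded packing abelian cubical} gives the finite-height control on the cosets of $A$ that bounds how many edge spaces accumulate near a bounded region, guaranteeing that only finitely many osculation patterns occur and hence that a finite cover suffices to separate them. Establishing separability of the relevant $A$-double cosets and checking that the resulting finite cover is compatible across all vertex and edge spaces is the technical heart of the argument.
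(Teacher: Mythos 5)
Your first two paragraphs are fine as far as they go: the identification $G\cong H*_A(A\times\integers)$ is correct, and gluing copies of $\widetilde X$ along the convex cocompact cores $\widetilde Y\cong\prod C_i$ supplied by Theorem~\ref{thm:cocompact cubical flat} does produce a proper cocompact cubulation of (a finite-index subgroup of) $G$; this is essentially the universal cover of the complex $Z=X'\cup\bigcup(Y_i'\times P)$ that the paper builds, except that after passing to a finite-index subgroup of $H$ one gets several orbits of edge spaces, indexed by the double cosets $Ag\pi_1X'$, not one. The genuine gap is in your last step. Producing a cubulation is not the issue --- for non-hyperbolic groups there is no general passage from ``cocompactly cubulated'' to ``virtually special,'' so everything rests on the finite-cover argument --- and the tool you invoke there, bounded packing via Theorem~\ref{thm:bounded packing abelian cubical}, cannot do that job. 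Bounded packing bounds how many cosets of $A$ can pairwise come $r$-close; it is not a separability statement, and even granting that only finitely many osculation patterns occur, finiteness of patterns does not by itself yield a finite cover separating them. For that you need separability of $A$ in $H$ (to kill self-osculation of the new hyperplanes) and separability of the relevant double cosets (to kill interosculation), neither of which you establish or cite; note that every normal subgroup has bounded packing, while separability is a genuinely different, profinite condition.

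The paper takes a different route precisely to avoid verifying specialness of the new cubulation by hand: it maps $Z$ by a local isometry into a model complex $S=R\cup\bigcup(F_i\times P)$ built over the Salvetti complex $R$ of the right-angled Artin group associated to the special cover $X'$, where $F_i\rightarrow R$ is the compact core of a highest abelian subgroup $B_i\leqslant\pi_1R$ containing $A_i=\pi_1X'\cap g_i^{-1}Ag_i$. This is a second, easy-to-miss use of the ``highest'' hypothesis: it guarantees $[H'\cap\dot B_i:A_i]<\infty$, so the fiber product $Y_i'$ of $X'\rightarrow R$ and $F_i\rightarrow R$ has $\pi_1Y_i'=A_i$. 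Virtual specialness of $S$ is then obtained by canonical completion and retraction (Lemma~\ref{lemma:CentralizingCompact Y}), which supplies exactly the separability input your sketch is missing, and specialness pulls back along the local isometry $Z\rightarrow S$. Your outline could likely be completed by replacing the appeal to bounded packing with the Haglund--Wise separability of convex-cocompact subgroups (and of the relevant double cosets) in virtually compact special groups, but as written the argument has a hole at precisely the step you yourself identify as the technical heart.
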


A version of Theorem~\ref{thm:Central HNN virtually special} was proven in \cite{WiseIsraelHierarchy} under the additional hypothesis of relative hyperbolicity, but Theorem~\ref{thm:bounded packing abelian cubical} allows us to avoid this hypothesis.
Example~\ref{exmp:4 Z2 subgroups} shows that $G$ can fail to have a virtually compact cubulation
when $H$ is a f.g. 2-dimensional right-angled Artin group, but $A$ is not highest.

Section~\ref{sec:restricted intersection} uses Theorem~\ref{thm:cocompact cubical flat} to restrict
how highest abelian subgroups intersect. The following amusing consequence of Corollary~\ref{cor:too many intersection directions} shows that generic multiple cyclic HNN extensions of $\integers^p$ cannot be virtually compactly cubulated:

\begin{GenericExmp}
 Let $\big\{\langle b_1\rangle,\ldots,\langle b_r\rangle,\langle c_1\rangle,\ldots,\langle c_r\rangle \big\}$
 be a collection of  pairwise incommensurable infinite cyclic subgroups of $\integers^p$,
 and suppose that $r>\frac{p}{2}$.
Let $G$ be the following multiple HNN extension of $\integers^p =\langle a_1,\ldots, a_p\rangle$:
$$G = \langle a_1,\ldots, a_p, t_1,\ldots, t_r \ \mid \  [a_i,a_j]=1, \ b_k^{t_k}=c_k \ : \ 1\leq k \leq r\rangle$$
Then $G$ does not contain a finite index subgroup that acts properly and cocompactly on a CAT(0) cube complex.
\end{GenericExmp}

This paper is structured as follows:
In Section~\ref{FlatDual} we prove Theorem~\ref{thm:hullTheorem}.
In Section~\ref{BPproperty} we collect existing results and explain how, along with Theorem~\ref{thm:hullTheorem}, they allow us to prove Theorem~\ref{thm:bounded packing abelian cubical}.
In Section~\ref{sec:restricted intersection} we observe that highest free-abelian subgroups have restricted intersections with other free-abelian subgroups.
In Section~\ref{CentralizingExtensions} we prove Theorem~\ref{thm:Central HNN virtually special}.

\vspace{2mm}
\noindent
{\bf Acknowledgement:}
We are grateful to Mathieu Carette for helpful suggestions.
We thank the referees for their comments and corrections.

\section{The dual to a flat} \label{FlatDual}

The goal of this section is to prove Theorem~\ref{thm:hullTheorem}, which we state below.
 A \emph{quasiline} is a CAT(0) cube complex quasi-isometric to $\mathbb{R}$, and a \emph{quasiray} is a CAT(0) cube complex quasi-isometric to $[0,\infty) \subseteq \mathbb{R}$ .
 A hyperplane in a CAT(0) cube complex $\widetilde X$ will be denoted by $H$, and its left and right halfspaces
 are denoted by $\overleftarrow{H}$ and $\overrightarrow{H}$.
As it is convenient to work with subcomplexes, we define $\overleftarrow{H},\overrightarrow{H}$ to be the smallest subcomplexes containing the complementary components of $\widetilde X-H$.
We will be using wallspaces and Sageev's dual cube complex construction \cite{Sageev95}.
We point the reader to~\cite{HruskaWiseAxioms} for an account of the techniques.
The proof of the following is given at the end of this section, after we have developed the required language and lemmas.

A subset $S \subseteq \widetilde X$ of a geodesic metric space is \emph{convex} if every geodesic with endpoints in $S$ is contained in $S$.
 When $\widetilde X$ is a complete CAT(0) space, a complete connected subspace $Y$ is convex if its inclusion into $\widetilde X$ is a local isometry.
  When $\widetilde X$ is a CAT(0) cube complex, and $Y$ is a subcomplex, there is a simple combinatorial criterion equivalent to being a local isometry: For each $0$-cube $v$ of $Y$ the inclusion $\link_Y(v) \hookrightarrow \link_{\widetilde X}(v)$ is a full subcomplex.
  We refer to~\cite{BridsonHaefliger} for a comprehensive account of CAT(0) metric spaces.

  For a subset $S$ of a CAT(0) cube complex $\widetilde X$, let $\hull(S)$ be the smallest nonempty convex subcomplex of $\widetilde X$ that contains $S$.

\begin{thm}\label{thm:hullTheorem}
 Let $A \leqslant G$ be a virtually abelian subgroup of rank~$p$ that acts properly and cocompactly on a flat $E$ in a CAT(0) cube complex $\widetilde X$ with $\textrm{dim}(\widetilde X) < \infty$.
%Then there exists a finite index subgroup $B \leqslant A$ such that either:
Then either:
\begin{enumerate}
 \item \label{poss2} $\hull(E)$ is $A$-cocompact and $\hull(E) \cong \prod^{p}_{i=1}C_i$, where each $C_i$ is a convex subcomplex that is a quasiline.
 \item \label{poss1} There exists a finite index subgroup $B \leqslant A$ such that $\min(B) \cap \hull(E)$ is not $B$-cocompact.
\end{enumerate}
\end{thm}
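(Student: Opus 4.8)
The plan is to translate the geometry of the flat $E$ into the combinatorics of the hyperplanes of $\widetilde X$ that cross it, and to read off the two alternatives from how these hyperplanes are arranged. After passing to a finite-index free-abelian subgroup (which changes neither $E$ nor $\hull(E)$, and is harmless for the cocompactness conclusions), I may assume $A \cong \integers^p$ acts on $E \cong \Euclidean^p$ by translations. Let $\mathcal W$ be the set of hyperplanes of $\widetilde X$ meeting $E$. Since halfspaces are convex, a hyperplane with $E$ on one side also has all of $\hull(E)$ on that side, so $\mathcal W$ is exactly the set of hyperplanes crossing $\hull(E)$; the collection is $A$-invariant, and because $A$ acts cocompactly on $E$ there are finitely many $A$-orbits. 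Each $W \in \mathcal W$ meets $E$ in an affine hyperplane, and I assign to $W$ the direction (unit normal up to sign) $\delta(W)$ of $W \cap E$. As $A$ acts by translations it preserves each direction, so $\delta(\mathcal W) = \{d_1,\dots,d_k\}$ is finite; and since the walls coarsely recover the Euclidean metric on $E$, their normals span $\reals^p$, forcing $k \ge p$.

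The heart of the argument is the alternative $k = p$ versus $k > p$, with the former corresponding to alternative~(\ref{poss2}). When $k=p$ the directions are independent, so I partition $\mathcal W = \mathcal W_1 \sqcup \cdots \sqcup \mathcal W_p$ by direction. Two hyperplanes of different classes meet $E$ in transverse affine hyperplanes, which always cross inside $E$, hence the hyperplanes cross in $\widetilde X$; thus every member of $\mathcal W_i$ crosses every member of $\mathcal W_j$ for $i \ne j$. The remaining combinatorial input is that two hyperplanes of the \emph{same} class do not cross in $\widetilde X$ — equivalently, that parallel walls in $E$ acquire no spurious crossing off the flat. Granting this, the standard product-decomposition criterion for CAT(0) cube complexes yields $\hull(E) \cong \prod_{i=1}^p C_i$, where $C_i$ is the convex subcomplex dual to the pairwise-noncrossing family $\mathcal W_i$. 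Each $C_i$ carries the linear order on its hyperplanes given by the order of the parallel affine walls along $d_i$, on which $A$ acts through a rank-one quotient cocompactly, so $C_i$ is a quasiline; the resulting product $A$-action is cocompact, giving~(\ref{poss2}).

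When $k > p$, or when some pair of parallel walls does cross off the flat, I would produce the subgroup required by alternative~(\ref{poss1}). The guiding picture is that an extra transverse direction, or a spurious same-direction crossing, forces $\hull(E)$ to spread away from $E$ in a direction not tracked by the $\integers^p$-action: the three-families-in-a-plane model shows that the cube complex dual to redundant wall directions is quasi-isometric to a Euclidean space of dimension exceeding $p$, so it cannot be $A$-cocompact. To make this quantitative I would use the flat torus theorem to write $\min(B) \cong \Euclidean^p \times Z$ for a finite-index $B \leqslant A$ chosen so that the offending hyperplanes become $B$-parallel, and then exhibit fibres $\{x\} \times Z$ along which $\min(B) \cap \hull(E)$ has unbounded diameter while $B$ translates only the $\Euclidean^p$-factor; this defeats $B$-cocompactness and establishes~(\ref{poss1}).

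The main obstacle is the combinatorial dichotomy controlling \emph{same-direction} hyperplanes: showing that in the good case parallel walls never cross in $\widetilde X$, and that every failure of the product picture is genuinely witnessed by a non-cocompact minset intersection rather than by some milder defect. This is exactly where the crossing-versus-osculation bookkeeping is needed, tracking for each orbit of hyperplanes whether consecutive translates cross, osculate, or separate, and relating an accumulation of osculations to escape from the minset. Converting such qualitative escape into the precise statement that $\min(B) \cap \hull(E)$ fails to be $B$-cocompact — uniformly over the finitely many orbits and after the correct choice of finite-index $B$ — is where I expect the real work of the section to lie.
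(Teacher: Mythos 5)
Your setup matches the paper's: pass to a finite-index translation subgroup, identify $\hull(E)$ with the cube complex dual to the hyperplanes meeting $E$, and sort those hyperplanes by the direction of their trace on $E$ (the paper's \emph{parallelism classes}). But your dichotomy is pitched at the wrong invariant, and the hardest case is left undone. The paper does not split on ``number of directions $=p$ versus $>p$, with no same-direction crossings''; it splits on whether any two orbits are \emph{semi-crossing} (i.e.\ $z_i^jH$ crosses $z_i^kH'$ exactly when $j-k$ exceeds some threshold). Same-direction hyperplanes that cross in $\widetilde X$ are not an obstruction you need to ``grant'' away: after passing to a subgroup $B$ acting disjointly on each orbit, fully crossing orbits simply land in different \emph{alignment classes}, the product decomposition goes through with more than $p$ quasiline factors, and alternative~(\ref{poss1}) follows because $\prod\ell_i$ is a $B$-invariant flat of dimension exceeding $\rank(B)$ inside $\min(B)\cap\hull(E)$. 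That part of your picture is essentially right. What your argument cannot handle is the semi-crossing case, exemplified by the paper's cubical halfplane (Figure~\ref{diag:Halfplane}): there $k=p=1$, yet two orbits semi-cross, the hull is a halfplane --- not quasi-isometric to any Euclidean space of dimension $>p$ --- and your proposed mechanism (``dual to redundant wall directions is quasi-isometric to a higher-dimensional Euclidean space'') simply does not apply. You flag this yourself as ``where the real work lies,'' and indeed it is: the paper resolves it by constructing, for a maximal alignment class $Q$ in the semi-crossing partial order, explicit $B$-equivariant, distance-nonincreasing cubical maps $\phi_k:\hull(E)\to\hull(E)$ that re-orient the hyperplanes of $Q$ by $z_i^k$ and fix the rest; equivariance plus the nonincreasing property forces $\phi_k(E)\subseteq\min(B)$, while $\dist(\phi_k(E),E)\to\infty$, defeating $B$-cocompactness.

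Two further points need repair even in your good case. First, the claim that parallel walls never cross off the flat is false in general, and establishing that each alignment class is a pairwise-disjoint family requires the preliminary lemma producing a finite-index subgroup acting disjointly (via a finite-dimensionality argument: infinitely many pairwise-crossing translates would give arbitrarily large cubes). Second, showing each factor $C_i$ is a quasiline is not just a matter of the linear order along $d_i$; the paper proves $G_i$-cocompactness of $C(\widetilde X, A_i)$ by bounding the orbits of maximal cubes. So the proposal is a reasonable outline of the easy half but has a genuine gap at the semi-crossing alternative, which is the crux of the theorem.
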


\begin{exmp}
 Consider the cyclic group $A$ generated by a diagonal glide reflection acting on the standard cubulation of the plane $\mathbb{R}^2$.
 Then $\min(A)$ is a diagonal line while $\hull(E)$ is $\mathbb{R}^2$.
\end{exmp}

Let $A \leqslant G$ be a virtually abelian subgroup of rank~$p$ that acts properly and cocompactly on a flat $E$ in a CAT(0) cube complex $\widetilde X$.
By a result of Bieberbach \cite{RatcliffeBook}, there exists a finite index free-abelian subgroup $ A_t \leqslant A$ that acts by translations on $E$.
  Let $P$ be the set of all hyperplanes intersecting $E$.
  The hyperplanes $H_1, H_2$ are \emph{parallel in $E$} if $H_1 \cap E$ and $H_2 \cap E$ are parallel in $E$.
  Being parallel in $E$ is an equivalence relation on the hyperplanes intersecting $E$.
  There are finitely many parallelism classes of hyperplanes in $E$, denoted $P_i \subseteq P$ for $1 \leq i \leq p$.

 \begin{lem}
  There exists a finite index subgroup $B \leqslant A_t$ that acts \emph{disjointly} in the sense that distinct hyperplanes in the same $B$-orbit are disjoint.
 \end{lem}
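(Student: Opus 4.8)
The plan is to treat each parallelism class $P_i$ separately and then set $B=\bigcap_{i=1}^{p}B_i$, a finite intersection of finite-index subgroups. Fix $i$. Since $A_t\cong\mathbb Z^p$ acts by translations on $E\cong\mathbb R^p$, the traces $H\cap E$ of the hyperplanes $H\in P_i$ are parallel affine $(p-1)$-planes, and fixing a transverse direction gives a linear functional $f_i$ on $E$ recording the position of each trace; an element $a\in A_t$ moves the trace of $H$ by $f_i(a)$. Distinct parallel hyperplanes of a CAT(0) cube complex are separated by an edge, so the trace positions are discrete and $f_i(A_t)$ is infinite cyclic; writing $K_i=\ker(f_i|_{A_t})$, which has rank $p-1$, we obtain a splitting $A_t=K_i\oplus\langle z_i\rangle$. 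Because $A_t$ is abelian, $a(cH)=c(aH)$ for all $c\in A_t$, so if a subgroup $B_i$ has disjoint orbits on a set of $A_t$-orbit representatives in $P_i$, then it has disjoint orbits on all of $P_i$; thus it suffices to control the finitely many orbit representatives $H^{(1)},\dots,H^{(s)}$.

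There are two ways an orbit can fail to be disjoint. First, if $a\in K_i$ and $aH\ne H$, then $aH$ and $H$ have the same trace $T=H\cap E$, so $T\subseteq aH\cap H$ and they are not disjoint. I would eliminate this by observing that only finitely many hyperplanes contain a given trace-flat $T$: a point $t$ in the interior of a cube meeting $T$ lies on at most $\dim\widetilde X$ hyperplanes, and each hyperplane containing $T$ contains $t$. Hence the $K_i$-orbit of each $H^{(j)}$ is finite, so $\operatorname{Stab}_{K_i}(H^{(j)})$ has finite index in $K_i$; let $L_i=\bigcap_j\operatorname{Stab}_{K_i}(H^{(j)})$, still of finite index. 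Imposing $B_i\cap K_i\le L_i$ — possible with finite index because $K_i$ is a direct summand — guarantees that no $a\in B_i\cap K_i$ moves any $H\in P_i$, removing all same-trace coincidences.

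The second, and genuinely geometric, obstruction is a transverse translate: an $a$ with $f_i(a)\ne0$, for which $aH$ has a trace disjoint from $T$, yet $aH$ may still cross $H$ (necessarily along a locus disjoint from $E$). The crux of the lemma is a bounded crossing width: there is a constant $R_i$ such that $H,aH\in P_i$ cross only if $|f_i(a)|\le R_i$; equivalently, each hyperplane is crossed by only finitely many hyperplanes of its own class. I expect this to be the main obstacle. The structural mechanism I would exploit is that a hyperplane $H''$ whose trace lies strictly between those of two crossing hyperplanes $H$ and $aH$ must itself cross one of them: if $H''$ crossed neither, then $H$ and $aH$ would lie in the opposite closed halfspaces of $H''$ containing their respective traces, so $H''$ would separate $H$ from $aH$, contradicting that they cross. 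Thus a wide crossing forces a single hyperplane to meet many same-class crossers, and I would rule this out using properness of the action on $\widetilde X$ — which follows from properness on $E$ because the nearest-point projection to $E$ is $A_t$-equivariant and $1$-Lipschitz — together with cocompactness. Granting the bound $R_i$, I choose $B_i$ so that every $a\in B_i$ with $f_i(a)\ne0$ satisfies $|f_i(a)|>R_i$, i.e. $f_i(B_i)\subseteq N_i\,f_i(A_t)$ for $N_i$ large; this is a finite-index condition via the summand $\langle z_i\rangle$.

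Finally I would arrange $B_i$ to satisfy simultaneously $B_i\cap K_i\le L_i$ and $|f_i(a)|>R_i$ whenever $f_i(a)\ne0$. Then for $a\in B_i$ and $H\in P_i$: if $f_i(a)=0$ the first condition forces $aH=H$, and if $f_i(a)\ne0$ the width bound forces $aH\cap H=\emptyset$; in either case distinct hyperplanes of the orbit are disjoint. Setting $B=\bigcap_{i=1}^{p}B_i$ yields the required finite-index subgroup acting disjointly. The only step that is not lattice bookkeeping is the bounded-width claim, which is where the CAT(0) cube geometry and the cocompactness hypotheses do the real work.
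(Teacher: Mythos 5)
Your overall architecture matches the paper's: work one parallelism class $P_i$ at a time, split $A_t$ into a corank-one subgroup $K_i$ of elements with no transverse translation plus a transverse generator, kill the same-trace coincidences by a finite-index condition inside $K_i$, take a deep power in the transverse direction, and intersect the $B_i$. Your treatment of the same-trace case is correct and in fact more explicit than the paper's (which packages your $K_i$ and $L_i$ as the kernel of the $A_t$-action on a dual quasiline $C(E^\epsilon,P_i^\epsilon)$). The problem is that you explicitly grant the one claim carrying all the geometric content --- the bounded crossing width $R_i$ --- and the tool you propose for it would not close the argument. Properness and cocompactness of an action on $\widetilde X$ are not among the hypotheses here (Theorem~\ref{thm:hullTheorem} only assumes $A$ acts properly and cocompactly on the flat $E$), and in any case properness does not bound the number of hyperplanes crossing a fixed hyperplane: in the standard cubulation of $\reals^2$ each hyperplane crosses infinitely many others. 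The hypothesis that does the work is finite dimensionality of $\widetilde X$.

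You already hold the missing piece: your ``betweenness'' mechanism is exactly the interval property the paper records, namely that if $H$ meets $g_i^{j}H$ and $g_i^{k}H$ then it meets $g_i^{\ell}H$ for $j\leq \ell\leq k$, since otherwise $g_i^{\ell}H$ would separate two crossing hyperplanes. Consequently the crossing set $c(H)=\{j: g_i^{j}H\cap H\neq\emptyset\}$ is an interval, and it is symmetric about $0$ (apply $g_i^{-j}$). So if no bound $R_i$ exists for one of the finitely many orbit representatives $H$, then $c(H)$ is an unbounded symmetric interval, hence all of $\integers$, and then every pair $g_i^{j}H, g_i^{k}H$ intersects. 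But $m$ pairwise intersecting hyperplanes in a CAT(0) cube complex mutually intersect in an $m$-cube, contradicting $\dimension(\widetilde X)<\infty$. Replacing ``granting the bound $R_i$'' with this two-line argument completes your proof, and the result is then essentially the argument in the paper.
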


 \begin{proof}
 For each parallelism class $P_i$, choose $g_i \in A_t$ such that the axis of $g_i$ crosses $H \cap E$ for $H \in P_i$.
There exists $n_i >0$ such that $\langle g_i^{n_i} \rangle$ acts disjointly on the hyperplanes in $P_i$, as otherwise $g_i^jH$ intersects $g_i^kH$ for all $j,k \in \mathbb{Z}$, contradicting that $\dimension(\widetilde X)<\infty$.
 Indeed $m$ pairwise intersecting hyperplanes mutually intersect in an $m$-cube.
  As a CAT(0) cube complex is dual to the wallspace associated to its collection of hyperplanes, a point in $\widetilde X$, together with such a collection of pairwise crossing hyperplanes determines an $m$-cube (see eg~\cite{HruskaWiseAxioms}).
   Alternatively, one can reach a contradiction from Proposition~\ref{prop:helly} applied to the subdivision of $\widetilde X$.

For $\epsilon>0$ to be determined below, we let $E^\epsilon$ denote $\neb_\epsilon(E)$ and for a hyperplane $H\in P_i$ we let $H^\epsilon = H\cap E^\epsilon$ and let $P_i^\epsilon = \{H^\epsilon: H\in P_i\}$.
Thus $(E^\epsilon, P_i^\epsilon)$ is a wallspace for each $i$.
We choose $\epsilon$ so that for each $i$, for each pair $H,H' \in P_i$
we have $H\cap E = H'\cap E$ if and only if $H,H'$ cross within $E^\epsilon$.
Cocompactness of $E$ ensures that there are finitely many intersection angles between
hyperplanes and $E$, and this allows us to bound the number of orbits of codimension-2 hyperplanes intersecting $\neb_1(E)$ but not intersecting $E$. We choose $\epsilon$ to be less than the minimal distance from $E$ to any such codimension-2 hyperplane.

 The dual cube complex $C(E^\epsilon, P_i^\epsilon)$ is a quasiline with an $A_t$-action.
 The kernel $K_i$ of this action is isomorphic to $\integers^{p-1}$.
 Adjoining $g_i^{n_i}$ to $K_i$, we obtain the
finite index subgroup $B_i = \langle K_i , g_i^{n_i} \rangle \leqslant A_t$.
For each $H\in P_i$ we have $B_iH = \langle g_i^{n_i} \rangle H$ and hence $B_i$ acts disjointly on hyperplanes in $P_i$.
Finally, $B = \bigcap_i B_i$ acts disjointly on the hyperplanes in $P$.
 \end{proof}

  For each parallelism class $P_i$, let $Z_i \leqslant B$ be an infinite cyclic group stabilizing a line $R_i \subseteq E$, non parallel to $H \cap E$ for all $H \in P_i$.
 Consider two hyperplanes $H,H' \in P_i$ with distinct $Z_i$-orbits:
 $\{z_i^jH: j\in \integers\}$ and $\{z_i^jH'  : j\in \integers\}$.
 Observe that if $H$ intersects both $z_i^jH'$ and $z_i^kH'$, then $H$ intersects $z_i^\ell H'$ for $j\leq \ell \leq k$.

\begin{defn}
We say  $H,H' \in P$ represent \emph{crossing} orbits if either $H \in P_i$ and $H' \in P_j$ with $i \neq j$, or if $H,H' \in P_i$ and $z_i^{j}H$ crosses $z_i^kH'$ for all $j,k\in \integers$.
We say $H, H' \in P_i$ represent \emph{aligned} orbits if $H$ intersects only finitely many $Z_i$-translates of $H'$.
We say  $H,H' \in P_i$ represent \emph{semi-crossing} orbits if they do not represent crossing orbits, and there exists $m \in \integers$ such that either $z_i^jH$ crosses $z_i^kH'$ for $j - k > m$, or $z_i^jH$ crosses $z_i^kH'$ for $k-j > m$.
Any pair of hyperplanes in $P$ must represent either  crossing,  semicrossing, or aligned orbits.
\end{defn}

\begin{figure}
  %\begin{overpic}[width=.7\textwidth,grid,tics=5] {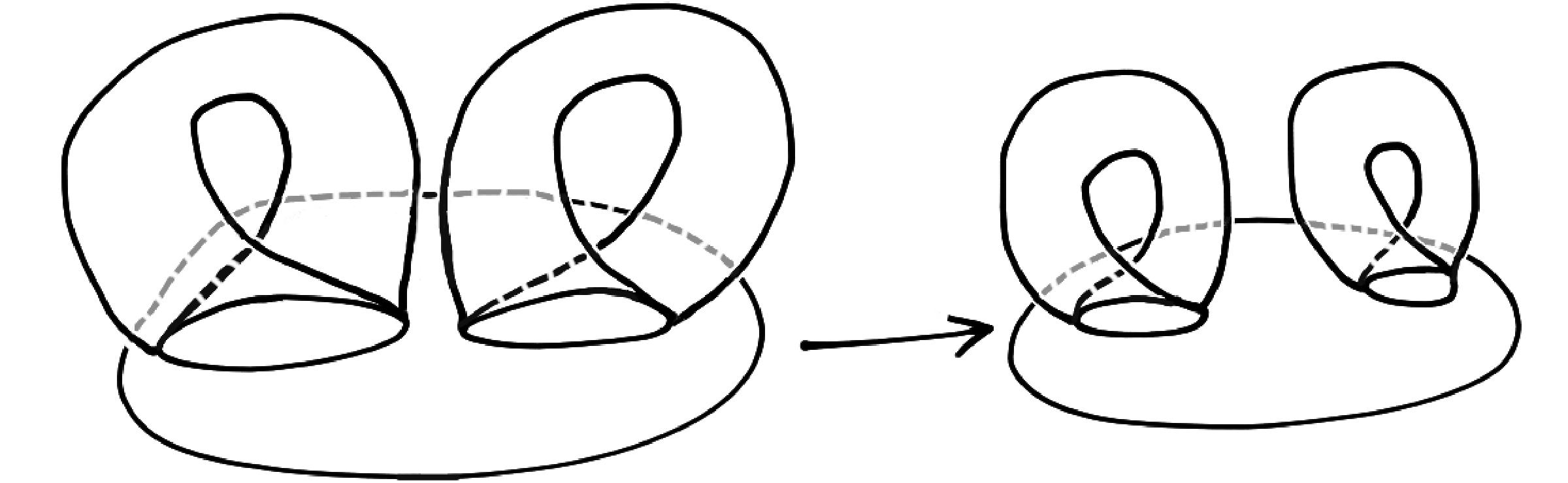}
  \begin{overpic}[width=.45\textwidth,tics=10]{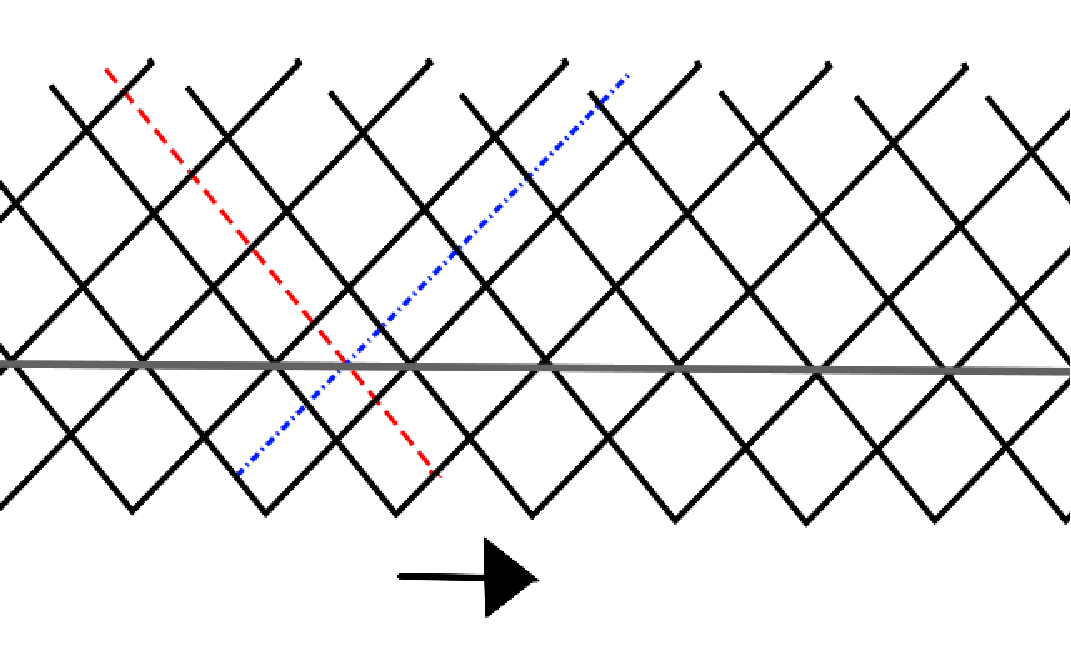}
   %\put(82,31){$F_2' \times P$}
  \end{overpic}
  \caption{\label{diag:Halfplane}A cubical halfplane with two semi-crossing hyperplane orbits.}
\end{figure}

\begin{exmp}
Let $\mathbb{Z}$ act on the cubical halfplane in Figure~\ref{diag:Halfplane}.
The flat plane $E$ is the line homeomorphic to $\mathbb{R}$.
Observe that there are two parallelism classes of orbits that semi-cross.
However, the action is not cocompact.
\end{exmp}

\begin{lem}
 Alignment of $Z_i$-orbits is an equivalence relation.
\end{lem}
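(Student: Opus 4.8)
The plan is to verify the three axioms of an equivalence relation, with reflexivity and symmetry being routine and transitivity carrying all the content. It is convenient to record, for $H,H'\in P_i$, the set
\[
I(H,H') = \{\,\ell\in\integers : H \text{ crosses } z_i^{\ell}H'\,\},
\]
where $z_i$ generates $Z_i$. The observation preceding the definition shows that $I(H,H')$ is an interval in $\integers$, and $H,H'$ are aligned precisely when $I(H,H')$ is finite. For reflexivity, disjointness of the $Z_i$-action gives that $H$ is disjoint from $z_i^{\ell}H$ for $\ell\neq 0$, so $I(H,H)\subseteq\{0\}$ is finite. For symmetry, applying $z_i^{-\ell}$ shows that $H$ crosses $z_i^{\ell}H'$ if and only if $H'$ crosses $z_i^{-\ell}H$, so $I(H',H)=-I(H,H')$ and finiteness is symmetric.

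For transitivity I will argue the contrapositive: if $I(H,H'')$ is infinite then at least one of $I(H,H')$, $I(H',H'')$ is infinite. Orient $R_i$ so that $z_i$ translates it positively; then each $K\in P_i$ meets $R_i$ in a single point $t(K)$ with $t(z_i^{\ell}K)=t(K)+\ell\tau$ for a fixed $\tau>0$, and we orient $K$ so that $\overrightarrow{K}$ is the halfspace meeting the positive end of $R_i$. Since $I(H,H'')$ is an infinite interval, after possibly replacing $z_i$ by $z_i^{-1}$ I may assume $H$ crosses $z_i^{\ell}H''$ for all $\ell\geq N$. I may also assume $I(H',H'')$ is finite, as otherwise we are done, so there are thresholds $a'\leq b'$ with $z_i^{\ell}H''\subseteq\overrightarrow{H'}$ for $\ell>b'$ and $z_i^{\ell}H''\subseteq\overleftarrow{H'}$ for $\ell<a'$; these come from $t(z_i^{\ell}H'')\to\pm\infty$ together with the absence of crossings outside a finite interval, which forces the connected hyperplane $z_i^{\ell}H''$ entirely onto one side of $H'$. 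Translating by $z_i^{m}$ yields $\overrightarrow{z_i^{\ell}H''}\subseteq\overrightarrow{z_i^{m}H'}$ when $\ell-m>b'$, and $\overleftarrow{z_i^{\ell}H''}\subseteq\overleftarrow{z_i^{m}H'}$ when $\ell-m<a'$.

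The crux is then quarter-space bookkeeping via the standard criterion that two hyperplanes cross if and only if all four intersections of their halfspaces are nonempty. Given any $m>N-a'$, choose $\ell_1=\max(N,m+b'+1)$ and $\ell_2=N$, so that $\ell_1\geq N$ with $\ell_1-m>b'$ and $\ell_2\geq N$ with $\ell_2-m<a'$. Since $\overrightarrow{z_i^{\ell_1}H''}\subseteq\overrightarrow{z_i^{m}H'}$, the two nonempty corners of $(H,z_i^{\ell_1}H'')$ on the forward side of $z_i^{\ell_1}H''$ witness $\overrightarrow{H}\cap\overrightarrow{z_i^{m}H'}\neq\varnothing$ and $\overleftarrow{H}\cap\overrightarrow{z_i^{m}H'}\neq\varnothing$; symmetrically $\overleftarrow{z_i^{\ell_2}H''}\subseteq\overleftarrow{z_i^{m}H'}$ yields the two backward corners. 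Hence all four corners of $(H,z_i^{m}H')$ are occupied, so $H$ crosses $z_i^{m}H'$. As this holds for every $m>N-a'$, the interval $I(H,H')$ is infinite, which is what we wanted. I expect the main obstacle to be precisely this assembly step: fixing the halfspace orientations consistently and checking that the two nesting relations combine to produce all four occupied corners for $(H,z_i^{m}H')$. The geometric inputs feeding it, namely transversality of $R_i$, monotonicity of $t$, and the four-corner crossing criterion, are standard, but they must be arranged carefully so that the bookkeeping goes through uniformly.
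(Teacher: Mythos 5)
Your proof is correct, and its overall skeleton matches the paper's: transitivity is the only nontrivial axiom, the crossing set $I(H,H')$ is an interval by the observation preceding the definition, and the assumed alignment of $Z_iH'$ with $Z_iH''$ is used to trap translates of $H''$ on definite sides of translates of $H'$. Where you diverge is in the finishing mechanism. The paper sandwiches $H''$ between two translates $z_i^jH'$ and $z_i^kH'$, notes that infinitely many members of $Z_iH$ would then cross $H''$ while missing both walls of the sandwich, and derives a contradiction from the fact that only finitely many hyperplanes can separate two fixed hyperplanes. You instead avoid any appeal to that separation-finiteness principle: from the nestings $\overrightarrow{z_i^{\ell_1}H''}\subseteq\overrightarrow{z_i^{m}H'}$ and $\overleftarrow{z_i^{\ell_2}H''}\subseteq\overleftarrow{z_i^{m}H'}$ you certify all four quadrants of the pair $(H,z_i^{m}H')$ and conclude that $H$ crosses $z_i^{m}H'$ for a whole ray of values of $m$, so $I(H,H')$ is infinite. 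This buys a slightly more self-contained argument -- it delivers the disjunction in the contrapositive directly and needs only the four-quadrant crossing criterion -- at the cost of more bookkeeping. One small repair: the paper's $\overleftarrow{H},\overrightarrow{H}$ are \emph{closed} subcomplexes whose intersection is the carrier of $H$, and with that convention ``all four corners nonempty'' does not imply crossing (osculating hyperplanes give a counterexample); your quadrant argument should be phrased with the open complementary components, for which both the nesting statements and the crossing criterion hold exactly as you use them.
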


\begin{proof}
 Reflexivity and symmetry are immediate.
Suppose $Z_iH$ is aligned to $Z_iH'$ and $Z_iH'$ is aligned to $Z_iH''$,
but infinitely many hyperplanes in $Z_iH$ cross $H''$.
 By their alignment, there exists $j,k$ such that $H'' \subset (z_i^j \overrightarrow{H}' \cap z_i^k \overleftarrow{H}')$.
 Since infinitely many elements of $Z_iH$ intersect $H''$, infinitely many of these elements intersect $H''$
  but do not intersect $z_i^j H'$ or $z_i^k H'$.
 This is a contradiction as infinitely many hyperplanes cannot separate $z_i^j H'$ and $z_i^k H'$.
 \end{proof}

\begin{lem} \label{partialOrdering}
Semi-crossing of $Z_i$-orbits is a partial ordering, denoted by $>$, where $Z_iH > Z_iH'$ if $Z_iH$ and $Z_iH'$ are semi-crossing and there exists $m \in \integers$ such that $z_i^jH$ crosses $z_i^kH'$ for $j - k > m$.
\end{lem}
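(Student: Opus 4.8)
The plan is to push everything through the \emph{crossing set}
\[
S(H,H') = \{\, d\in\integers : H \text{ crosses } z_i^d H' \,\}, \qquad H,H'\in P_i .
\]
The convexity observation recorded just before the definition says precisely that $S(H,H')$ is an interval (a convex subset) of $\integers$. Since crossing is a $Z_i$--invariant symmetric relation, replacing $H,H'$ by other orbit representatives merely translates $S(H,H')$, so the notions below are well defined on orbits; moreover $S(H',H)=-S(H,H')$, because $H'$ crosses $z_i^dH$ exactly when $H$ crosses $z_i^{-d}H'$. Reading off the definitions, $Z_iH$ and $Z_iH'$ are crossing iff $S(H,H')=\integers$, aligned iff $S(H,H')$ is finite, and semi-crossing iff $S(H,H')$ is a proper ray; and $Z_iH>Z_iH'$ holds precisely when $S(H,H')$ is a proper \emph{left} ray $(-\infty,a]$. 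A strict partial order requires irreflexivity, asymmetry and transitivity. The first two are then immediate: distinct hyperplanes of a single $Z_i$--orbit are disjoint (the action is disjoint), so $S(H,H)=\{0\}$ is finite and $Z_iH\not>Z_iH$; and $S(H',H)=-S(H,H')$ turns a left ray into a right ray, so $Z_iH>Z_iH'$ excludes $Z_iH'>Z_iH$.

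For transitivity, suppose $Z_iH>Z_iH'$ and $Z_iH'>Z_iH''$, with $S(H,H')=(-\infty,a]$ and $S(H',H'')=(-\infty,b]$. First I would fix the coherent transverse orientation of $P_i$ coming from $R_i$: the line $R_i$ meets each hyperplane of $P_i$ exactly once, so its positive direction distinguishes $\overrightarrow{K}$ from $\overleftarrow{K}$ for every $K\in P_i$, and $z_i^dH'\subseteq\overleftarrow{z_i^{d'}H'}$ whenever $d<d'$. With this convention $Z_iH>Z_iH'$ yields the nesting $\overrightarrow{z_i^dH'}\subseteq\overrightarrow{H}$ for all $d>a$: for such $d$ the hyperplane $H$ is disjoint from $z_i^dH'$, and since $H$ still crosses $z_i^aH'\subseteq\overleftarrow{z_i^dH'}$ it must lie in $\overleftarrow{z_i^dH'}$. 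Applying $z_i^{a+1}$ to the corresponding nesting for $(H',H'')$ and composing gives $\overrightarrow{z_i^fH''}\subseteq\overrightarrow{z_i^{a+1}H'}\subseteq\overrightarrow{H}$ for all $f>a+b+1$. Hence $H$ is disjoint from $z_i^fH''$ for all large $f$, so $S(H,H'')$ is bounded above.

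The hard part will be the complementary fact that $S(H,H'')$ is \emph{infinite}: this is where a genuine separation argument is unavoidable, since crossing is not transitive and cannot be chained along the intermediate orbit $Z_iH'$. I would argue by contradiction. If $S(H,H'')$ were finite, then $Z_iH$ and $Z_iH''$ would be aligned, and monotonicity of the side of $H$ relative to the ordered, pairwise disjoint orbit $\{z_i^fH''\}$ (again a consequence of the convexity observation, together with the large-$f$ nesting just obtained) would force $H\subseteq\overrightarrow{z_i^fH''}$ for all sufficiently negative $f$. Fix such an $f$ and choose $c\in\integers$ with $c\le a$ and $c<f-b$. Then $H$ crosses $z_i^cH'$, giving a point $x\in H\cap z_i^cH'$; and from $S(H',H'')=(-\infty,b]$ one computes that $z_i^cH'$ crosses $z_i^gH''$ only for $g\le c+b<f$ and lies on the negative side of the rest, so $z_i^cH'\subseteq\overleftarrow{z_i^fH''}$. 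Thus $x$ lies on opposite sides of $z_i^fH''$ from all of $H$, a contradiction. Therefore $S(H,H'')$ is infinite and bounded above, hence a proper left ray, which is exactly $Z_iH>Z_iH''$. I expect this final separation step to be the delicate point, the remainder being bookkeeping with the interval $S$ and the coherent orientation.
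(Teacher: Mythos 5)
Your proof is correct. The crossing-set formalism $S(H,H')$ together with its convexity is a tidy way to package the trichotomy that the paper only asserts inside the definition, and your irreflexivity/asymmetry checks amount to the paper's antisymmetry argument. For transitivity the two proofs share their first half: your nesting $\overrightarrow{z_i^fH''}\subseteq\overrightarrow{z_i^{a+1}H'}\subseteq\overrightarrow{H}$ is exactly the paper's separation of the far translates $z_i^nH_3$ from $H_1$ by the interposed translate $z_i^{N_1}H_2$, and in both cases this bounds the crossing set on one side and pins down the direction of the inequality. You diverge in how alignment of the outer pair is excluded: the paper first proves an auxiliary claim (if $Z_iH_1$ and $Z_iH_2$ are aligned and $Z_iH'>Z_iH_1$ then $Z_iH'>Z_iH_2$) and then contradicts antisymmetry, whereas you argue directly that a crossing point $x\in H\cap z_i^cH'$ with $c\ll 0$ would lie on the wrong side of a far-negative translate $z_i^fH''$, forcing $S(H,H'')$ to be unbounded below. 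Both are sound; your route is more self-contained for the lemma as stated, while the paper's auxiliary claim is not wasted, since it is what lets $>$ descend to a partial order on alignment classes, which is invoked later when a maximal alignment class is chosen in the proof of Theorem~\ref{thm:hullTheorem}. Two small points deserve an explicit line: the normalization that $z_i$ translates in the positive direction of $R_i$ (so that $z_i^dH'\subseteq\overleftarrow{z_i^{d'}H'}$ for $d<d'$), and the ``monotonicity'' step $H\subseteq\overrightarrow{z_i^fH''}$ for $f\ll 0$, which follows because the integers where $H$ lies on the positive side of $z_i^fH''$ form a down-set whose complement is the union of the finite interval $S(H,H'')$ with the up-set where $H$ lies on the negative side.
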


\begin{proof}
Antisymmetry holds since, if $Z_iH > Z_iH'$ and $Z_iH' > Z_iH$ but $Z_iH \neq Z_iH'$ then $Z_iH$ and $Z_iH'$ are crossing orbits. However, distinct crossing orbits are not semi-crossing.

To prove transitivity we first prove the following claim: If $Z_iH_1$ and $Z_iH_2$ are aligned orbits and $Z_iH' > Z_iH_1$, then $Z_iH' > Z_iH_2$.
By their alignment, there exists $p<q$ such that $H_2 \subset (z_i^p \overrightarrow{H}_1 \cap z_i^q \overleftarrow{H}_1)$.
   Suppose  $z_i^j H'$ crosses $z_i^kH_1$ precisely for $j-k > m$.
    Then $H_2$ is crossed by $z_i^jH'$ for $j - q > m$ as $z_i^jH'$ crosses both $z_i^qH_1$ and $z_i^pH_1$. Similarly $H_2$ is not crossed by $z_i^jH'$ for $j-p \leq m$. This implies that $Z_iH' > Z_iH_2$.
  Similarly $Z_iH' < Z_iH_1$ would imply $Z_iH' < Z_iH_2$.

  Suppose that $Z_iH_3 > Z_iH_2 > Z_iH_1$.
  By the claim, $Z_iH_1$ cannot be aligned to $Z_iH_3$.
  Therefore we need only exclude the possibility that $Z_iH_1$ and $Z_iH_3$ are crossing orbits.
Since $Z_iH_2> Z_iH_1$ there exists $N_1$ such that $z_i^nH_2$ is disjoint from $H_1$ for all $n \leq N_1$. Assume that $H \subseteq z_i^{N_1}\overrightarrow{H}_2$.
  Since $Z_iH_3>Z_iH_2$ there exists $N_2$ such that $z_i^nH_3$ is disjoint from $z_i^{N_1}H_2$ for all $n \leq N_2$.
  Since $z_i$ acts by translation on $E$ we can deduce that $z_i^nH_3 \subseteq z_i^{N_1}\overleftarrow{H}_2$ for $n \leq N_2$.
  Hence, $z_i^nH_3$ is disjoint from $H_1$ for all $n \leq N_2$ as they are separated by $z_i^{N_1}{H}_2$.
 \end{proof}

 \begin{proof}[Proof of Theorem~\ref{thm:hullTheorem}]
We first assume there are no semi-crossing orbits.
In the next three steps we will show that $\hull(E) = \prod_{i=1}^p C_i$  where each $C_i$ is the quasiline dual to the family of hyperplanes corresponding to an alignment class.

First, we claim that $\hull(E)$ is isomorphic to $C(\widetilde X, P)$.
Indeed, each $0$-cube $x$ in $\hull(E)$ corresponds to the $0$-cube $y$ in $C(\widetilde X, P)$ where each hyperplane in $P$ is oriented towards $x$.
Conversely, a $0$-cube $y$ in $C(\widetilde X, P)$ corresponds to a $0$-cube $x$ in $\widetilde X$ by orienting the hyperplanes not in $P$ towards $E$.
 Moreover, $x \in \hull(E)$ since $x$ lies in each halfspaces containing $E$.
This bijection preserves adjacency.

 Secondly, let $\{ A_i \}_{i=1}^p$ be an enumeration of the alignment classes.
 Observe $C(\widetilde X, P) \cong \prod_{i=1}^m C(\widetilde X, A_i)$ as every hyperplane in $A_i$ intersects every hyperplane in $A_j$ for $i \neq j$.
 Indeed, a $0$-cube of $C(\widetilde X, P)$ determines a $0$-cube of each of the factors by ignoring the orientations in the other alignment classes.
 Conversely, a choice $0$-cubes in each of the factors determines a $0$-cube in $C(\widetilde X, P)$ since the hyperplanes cross each other.
 Again, it is easy to see this bijection preserves adjacency.

 Thirdly, let $G_i = \langle g_i \rangle$ be an infinite cyclic subgroup of $B$ acting freely on $C(\widetilde X, A_i)$.
 We will show that $C(\widetilde X, A_i)$ is $G_i$-cocompact, and therefore quasiisometric to $\reals$.
 Let $H_1,  \ldots, H_k$ be representatives of the distinct $G_i$-orbits.
 Note that the dimension of $C(\widetilde X, A_i)$ is bounded by $k$.
 We now show that there are finitely many $G_i$-orbits of maximal cubes.
 A maximal cube corresponds to a collection of pairwise intersecting hyperplanes $g_i^{\alpha_1}H_{j_1}, \ldots g_i^{\alpha_\ell}H_{j_\ell}$.
 By translating we can assume that $\alpha_1 = 0$, and therefore there are finitely many such collections since only finitely many hyperplanes can intersect $H_{j_1}$.

  Then $\hull(E) = \prod_{i=1}^p C_i$  where each $C_i$ is the quasiline dual to the family of hyperplanes corresponding to an alignment class.
 Observe that $B$ acts by translations on $E$ with disjoint hyperplane-orbits and hence stabilizes each alignment class and thus preserves the factors of the product structure.
 If $p= \rank(A)$ the action on $\hull(E)$ is cocompact, which implies that the set of hyperplanes orthogonal to each $R_i$ belong to a single alignment class.
 Otherwise $p>\rank(A)$, and as $B$ acts metrically properly and cocompactly on $C_i$,
 each $C_i$ contains an isometrically embedded $B$-invariant line $\ell_i$.
  Thus $\prod \ell_i \subseteq \prod_{i=1}^p C_i$ is not cocompact,
 but is contained in $\min(B) \cap \hull(E)$.

Suppose there are at least two semi-crossing orbits in some parallelism class,
and let $Q$ be a maximal alignment class with respect to the partial ordering.
For each parallelism class $P_i$ and orbit $Z_iH \subseteq P_i - Q$:
 either $Z_iH$ crosses the orbits in $Q$,
or $Q\subset P_i$ and $Z_iH' > Z_iH$ for all $H' \in Q$.

We define a sequence of $B$-equivariant cubical maps
 $\{\phi_k: \hull(E) \rightarrow \hull(E)\}_{k\in \naturals}$
 using the partition $P = Q \sqcup Q^c$:
A $0$-cell $x$ in $\hull(E)$ corresponds uniquely to a choice of orientation for each hyperplane intersecting $E$.
 Let $x[H] \in \big\{\overleftarrow{H}, \overrightarrow{H}\big\}$ denote the halfspace of $H$ containing $x$ in its interior. Its image $\phi_k(x)$ is specified by how $\phi_k(x)$ orients the hyperplanes intersecting $E$.
 For $H \in Q^c$ let $\phi_k(x)[H] = x[H]$.
 For $H \in Q \subseteq P_i$ let $\phi_k(x)[z_i^{k}H] = x[H]$.
 This defines a $0$-cube in $\hull(E)$ since only finitely many hyperplanes have their orientations changed, and disjoint hyperplanes are not oriented away from each other by $\phi_k(x)$:
 Let $H \subseteq P_i \subseteq Q^c$ represent a $Z_i$-orbit not crossing the $Z_i$-orbits in $Q \subseteq P_i$, then $Z_iH' > Z_iH$ for any $H' \in Q$. Therefore, if $H'$ crosses $H$ then $z_i^{k}H'$ also crosses $H$.

 The injectivity of $\phi_k$ on $0$-cubes holds since if $x_1\neq x_2$ then there exists $H \in P$ such that $x_1[H] \neq x_2[H]$.
 If $H \in Q^c$ then $\phi_k(x_1)[H] = x_1[H] \neq x_2[H] = \phi_k(x_2)[H]$ so $\phi_k(x_1) \neq \phi_k(x_2)$.
 If $H \in Q$ then $\phi_k(x_1)[z_i^{-k}H] = x_1[H] \neq x_2[H] = \phi_k(x_2)[z_i^{-k}H]$ so $\phi_k(x_1) \neq \phi_k(x_2)$.
 Therefore $\phi_k$ is injective on the $0$-skeleton.
 Similar reasoning shows that $\phi_k$ sends adjacent $0$-cubes to adjacent $0$-cubes and so $\phi_k$ extends to the $1$-skeleton of $\hull(E)$.
 Moreover injective maps on the 1-skeleton send squares to squares, hence the map also extends to the 2-skeleton.

 Any map defined on the 2-skeleton of a cube complex extends uniquely to a cubical map on the entire complex.
 Observe that $B$ acts on $E$ by translation and preserves each $Z_j$-orbit in each $P_j$.
 Therefore, for each $b \in B$ there exists $\ell_i$, for $1 \leq i \leq p$, such that $b H = z_i^{\ell_i}H$ for each $H \in P_i$.
 Therefore $\phi_k$ is $B$-equivariant since if $H \in P_i$ but $H \notin Q$ then
\begin{equation*}\begin{split}
(b\cdot \phi_k(x))[H]  &= \phi_k(x)[b^{-1}  H]  =  \phi_k(x)[z_i^{-\ell_i} H]    \\
 &= x[z_i^{-\ell_i}H]  = x[b^{-1} H]  =  (b \cdot x)[H]  = \phi_k(b\cdot x)[H].
   \end{split}
   \end{equation*}
 \noindent Similarly, if $H \in Q \subseteq P_i$ then
\begin{equation*}\begin{split}
(b\cdot \phi_k(x))[H] &= \phi_k(x)[b^{-1}  H] = \phi_k(x)[z_i^{-\ell_i} H] \\
&= x[z_i^{k-\ell_i}H] = x[b^{-1}z_i^{k} H] = (b \cdot x)[z_i^kH] = \phi_k(b\cdot x)[H].
\end{split}\end{equation*}

 We now show that $\dist(\phi_k(x),b\cdot x)\geq k$ for each $b \in B$ and $x$ a canonical $0$-cube $x$ associated to a point in $E$.
 For each $Z_j$-orbit $Z_jH$ fix a representative $H$ such that $x \in \overrightarrow{H} \cap z_i \overleftarrow{H}$.
 Let $H \in Q \subseteq P_i$ be such a representative, then $\phi_k$ changes the orientation of precisely $k$ hyperplanes in $Z_iH$, namely $z_iH , \ldots, z_i^kH$.
 For any representative $H \in P_i$, however, translation by $b$ changes the orientation of $\ell_i$ hyperplanes, namely $z_iH, \ldots, z_i^{\ell_i}H$.
 As there is at least one $Z_i$-orbit in $P_i \supseteq Q$ not in $Q$, we can deduce that at least $k$ hyperplanes have distinct orientations in $\phi_k(x)$ and $b\cdot x$.
 Therefore the distance from $bx$ to $\phi_k(x)$ is at least $k$.

Observe that $\dist(\phi_k(y_1),\phi_k(y_2)) \leq \dist(y_1,y_2)$ for $y_1,y_2\in \hull(E)$.
Indeed, the CAT(0) metric on $\hull(E)$ is defined to be the infimal length of piecewise Euclidean paths joining points, and the map preserves lengths of paths.
The $B$-equivariance together with that $\phi_k$ is distance-nonincreasing implies that $\phi_k(e)\in \min(B)$ for each $e\in E$.

 In conclusion, $\dist(\phi_k(E), E)) \rightarrow \infty$ as $k\rightarrow \infty$.
For $e \in E$, the orbit $Be \subseteq E$ is mapped by a distance-nonincreasing function to a new orbit at distance $ \geq k$ from $E$.
Since the original flat was in $\min(B)$, the image of the image orbit is isometric to the original orbit.
Since $k$ is unbounded, $\min(B) \cap \hull(E)$ is non-cocompact.
 \end{proof}

\section{The bounded packing property} \label{BPproperty}

Let $G$ be a finitely generated group with Cayley graph $\Upsilon$.
Suppose $G$ acts by isometries on a geodesic metric space $\widetilde{X}$ such that the map $g \mapsto gx_0$ is a quasi-isometric embedding for some $x_0 \in \widetilde{X}$.
Then $H \leqslant G$ has bounded packing if and only if for each $r >0$ there exists $m = m(r)$ such that if $g_1H, \ldots , g_mH$ are distinct left cosets of $H$, then there exists $i,j$ such that $\dist_{\widetilde{X}}(g_iHx_0, g_jHx_0) > r$.
We refer to \cite{HruskaWisePacking} for more about bounded packing, and specifically to Cor~2.9, Lem~2.3, and Lem~2.4 for the following:

\begin{lem} \label{lem:finite index subnormal}
Suppose $H$ has bounded packing in $G$.
If $K \leqslant H$ is a normal subgroup of $H$, then $K$ has bounded packing in $G$.
If $K \leqslant G$ is a subgroup such that $[H:K\cap H]<\infty$ and $[K:K\cap H]<\infty$, then $K$ has bounded packing.
\end{lem}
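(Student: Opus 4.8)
The plan is to phrase everything in terms of the coset pseudodistance $\dist(gH,g'H)=\inf\{\dist_\Upsilon(gh,g'h'):h,h'\in H\}$ and to use the contrapositive form of bounded packing: $H$ has bounded packing exactly when, for each $r$, there is a uniform bound $m_H(r)$ on the cardinality of any family of distinct cosets that is pairwise $r$-close. Throughout, $H$ has bounded packing is the standing hypothesis. Two trivial observations are used repeatedly. Since a coset of a larger subgroup contains the corresponding coset of a smaller one, passing to a larger subgroup only decreases coset distances; and since $\dist_\Upsilon$ is left-invariant while the ball $B_r$ about the identity is finite, I may translate any configuration and may bound the number of cosets whose representatives lie in a fixed bounded set.

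\textbf{The normal case.} Let $g_1K,\dots,g_nK$ be distinct and pairwise $r$-close, where $K\trianglelefteq H$. The cosets $g_iH$ are then pairwise $r$-close, so at most $m_H(r)-1$ of them are distinct, and it suffices to bound the number of pairwise $r$-close $K$-cosets lying inside a single $H$-coset; after translating, this is the number of pairwise $r$-close cosets $h_1K,\dots,h_tK$ with all $h_i\in H$. Normality enters twice. First, if $h_iK$ and $h_jK$ are $r$-close then $\dist_\Upsilon(e,k^{-1}h_i^{-1}h_jk')\le r$ for some $k,k'\in K$; writing $w=h_i^{-1}h_j$ and $wk'=(wk'w^{-1})w$, normality and $w\in H$ absorb $wk'w^{-1}$ into $K$, yielding $\dist_\Upsilon(w,K)\le r$, i.e.\ $h_i^{-1}h_j\in\neb_r(K)$. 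Normalizing $h_1=e$, each $h_i$ thus lies in $H\cap\neb_r(K)=H\cap KB_r$, so $h_i=\kappa_ib_i$ with $\kappa_i\in K$ and $b_i\in H\cap B_r$. Second, normality again gives $h_iK=h_jK$ if and only if $b_iK=b_jK$, so the number of distinct cosets among the $h_iK$ is at most $\size{H\cap B_r}\le\size{B_r}$. Combining, $n\le(m_H(r)-1)\,\size{B_r}$, so $K$ has bounded packing.

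\textbf{The commensurable case.} Put $L=K\cap H$, which has finite index in both $H$ and $K$; I show bounded packing descends and ascends across finite index and combine the two along $L\leqslant H$ and $L\leqslant K$. For the descent ($[H:L]<\infty$, $H$ has bounded packing), a pairwise $r$-close family of $L$-cosets maps by inclusion to a pairwise $r$-close family of $H$-cosets, of which at most $m_H(r)-1$ are distinct; as each $H$-coset contains exactly $[H:L]$ cosets of $L$, the family has size at most $[H:L]\,(m_H(r)-1)$, so $L$ has bounded packing. For the ascent ($[K:L]<\infty$, $L$ has bounded packing), right coset representatives $k^{(s)}$ of $L$ in $K$ show $K\subseteq\neb_C(L)$ for $C=\max_s\dist_\Upsilon(e,k^{(s)})$, so $K$ and $L$ are at finite Hausdorff distance. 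Hence if $g_iK,g_jK$ are $r$-close, choosing $k,k'\in K$ with $\dist_\Upsilon(g_ik,g_jk')\le r$ and replacing $g_ik,g_jk'$ by points $p\in g_iL$, $q\in g_jL$ within $C$ of them shows $\dist_\Upsilon(p,q)\le r+2C$, so the base cosets $g_iL,g_jL$ are $(r+2C)$-close. Because $L\leqslant K$, distinct $K$-cosets have distinct base cosets, so a pairwise $r$-close family of $n$ distinct $K$-cosets produces $n$ distinct pairwise $(r+2C)$-close $L$-cosets, forcing $n<m_L(r+2C)$ and giving bounded packing of $K$.

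\textbf{Main obstacle.} The genuine difficulty is that the coset pseudodistance is not a metric---it fails the triangle inequality because cosets have infinite diameter---so one cannot cluster cosets through a common near-neighbour. This is precisely what blocks a naive proof of the ascent step and of the normal case. The two resolutions above are the crux: in the ascent, finite Hausdorff distance lets me select a single base $L$-coset per $K$-coset and lose only the additive constant $2C$ in the packing radius; in the normal case, normality is what forces $\neb_r(K)$ to meet only finitely many $K$-cosets through $H$ (without it this count can be infinite), converting proximity of cosets into membership of $h_i^{-1}h_j$ in a fixed finite neighbourhood of $K$.
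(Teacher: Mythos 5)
Your proof is correct. The paper gives no argument of its own for this lemma---it simply cites Hruska--Wise (Cor.~2.9, Lem.~2.3, Lem.~2.4 of their bounded packing paper)---and your self-contained treatment follows essentially the same standard route as that source: finite-index descent and ascent via coset counting and finite Hausdorff distance, and, for the normal case, using normality to convert $r$-closeness of $K$-cosets inside a single $H$-coset into membership of $h_i^{-1}h_j$ in $KB_r$, which bounds the count by $\size{B_r}$.
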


A proof of the following well known fact follows from the median space structure of the 1-skeleton of a CAT(0) cube complex ~\cite[Thm~2.2]{RollerPocSets}.
A proof using disk diagrams can be found in~\cite[Sec 2]{WiseIsraelHierarchy}.

\begin{prop}[Helly Property]\label{prop:helly}
Let $Y_1,\ldots, Y_r$ be convex subcomplexes of a CAT(0) cube complex.
If $Y_i\cap Y_j\neq \emptyset$ for each $i,j$,
then $\cap_{i=1}^r Y_i \neq \emptyset$.
\end{prop}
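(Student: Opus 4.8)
The plan is to reduce everything to the three-fold case and then induct, exploiting the fact that the $0$-skeleton $\widetilde X^{(0)}$, equipped with the combinatorial edge-path metric, is a median graph, and that a convex subcomplex $Y$ meets $\widetilde X^{(0)}$ in a vertex set closed under the median operation $m(\cdot,\cdot,\cdot)$. First I would record two elementary preliminaries. (i) An intersection of convex subcomplexes is again a convex subcomplex: any geodesic with endpoints in the intersection lies in each factor, and an intersection of subcomplexes is a subcomplex. (ii) A nonempty subcomplex contains a $0$-cube, so each hypothesis $Y_i \cap Y_j \neq \emptyset$ supplies a common $0$-cube.

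The heart of the argument is the case $r = 3$. Given pairwise-intersecting convex subcomplexes $Y_1, Y_2, Y_3$, I would choose $0$-cubes $a \in Y_2 \cap Y_3$, $b \in Y_1 \cap Y_3$, and $c \in Y_1 \cap Y_2$, and set $m = m(a,b,c)$. Since $m$ lies on a combinatorial geodesic joining each pair among $a,b,c$, and since each $Y_i$ is closed under medians (equivalently, contains all geodesics between its $0$-cubes), we obtain $m \in Y_3$ (as $a,b \in Y_3$), $m \in Y_1$ (as $b,c \in Y_1$), and $m \in Y_2$ (as $a,c \in Y_2$). Hence $m \in Y_1 \cap Y_2 \cap Y_3$.

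For $r \geq 4$ I would induct, with the three-fold case as the base. Set $Z_i = Y_i \cap Y_r$ for $1 \leq i \leq r-1$; by preliminary (i) each $Z_i$ is a convex subcomplex. The family $\{Z_i\}$ is pairwise-intersecting, because $Z_i \cap Z_j = Y_i \cap Y_j \cap Y_r$ is nonempty by the three-fold case applied to the pairwise-intersecting triple $Y_i, Y_j, Y_r$. Applying the inductive hypothesis to the $r-1$ convex subcomplexes $Z_1, \ldots, Z_{r-1}$ yields that $\bigcap_{i=1}^{r-1} Z_i = \bigcap_{i=1}^{r} Y_i$ is nonempty, completing the induction.

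The only genuine content, and the step I would treat most carefully, is the compatibility of the two notions of convexity in play: I am using that a subcomplex which is CAT(0)-convex (equivalently, satisfies the full-subcomplex/link criterion recalled before the statement) has a $0$-skeleton that is convex in the median graph $\widetilde X^{(0)}$, and is therefore closed under the median operation. This is precisely the input provided by the median-space structure of the $1$-skeleton \cite{RollerPocSets}; granting it, the remainder is the standard Helly argument for median spaces. Alternatively, as noted, one can bypass the median formalism and instead argue via minimal-area disk diagrams as in \cite{WiseIsraelHierarchy}, though that route is considerably more laborious.
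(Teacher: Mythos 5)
Your proof is correct and follows exactly the route the paper itself indicates: the median-space structure of the $1$-skeleton, with the triple case handled by taking the median of pairwise intersection points and the general case by induction. The paper gives no written argument of its own, merely citing this median approach (and the disk-diagram alternative), so your fleshed-out version, including the careful remark that CAT(0)-convexity of a subcomplex yields combinatorial convexity and hence closure under medians, is precisely the intended proof.
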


The following is obtained in  \cite[Lem~13.15]{HaglundWiseSpecial}:

\begin{lem}\label{lem:r thickening}
Let $\widetilde Y\subset \widetilde X$ be a convex subcomplex of the CAT(0) cube complex $\widetilde X$.
For each $r\geq 0$ there exists a convex subcomplex $\widetilde Y^{+r}$ such that $\neb_r(\widetilde Y)\subset \widetilde Y^{+r} \subset \neb_s(\widetilde Y)$
for some $s\geq 0$.
Here $\neb_m(\widetilde Y)$ denotes the $m$-neighborhood of $\widetilde Y$.
\end{lem}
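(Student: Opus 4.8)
The plan is to take $\widetilde Y^{+r}$ to be the convex hull of a combinatorial neighborhood of $\widetilde Y$, and to extract the required metric bound from the interaction of convex hulls with hyperplanes. Since the combinatorial and $\mathrm{CAT}(0)$ metrics on $\widetilde X$ are bi-Lipschitz equivalent (with constants depending only on $n=\dimension(\widetilde X)<\infty$), it suffices to produce, for each integer $r$, a convex subcomplex sandwiched between the combinatorial neighborhoods $\neb^{c}_{r}(\widetilde Y)$ and $\neb^{c}_{s}(\widetilde Y)$; adjusting constants then yields the stated inclusions for the metric neighborhoods. I would simply set $\widetilde Y^{+r}=\hull\big(\neb^{c}_{r}(\widetilde Y)\big)$, which is convex and contains $\neb^{c}_{r}(\widetilde Y)$ by construction. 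The key input is the standard description of the hull as an intersection of halfspaces (see \cite{Sageev95,HruskaWiseAxioms}): a $0$-cube $x$ lies in $\widetilde Y^{+r}$ exactly when $x$ lies on the $\widetilde Y$-side of every hyperplane $H$ with $\neb^{c}_{r}(\widetilde Y)\subseteq \overrightarrow H$. Unwinding this, the only hyperplanes that can separate a point of $\widetilde Y^{+r}$ from $\widetilde Y$ are those $H$ disjoint from $\widetilde Y$ whose far halfspace $\overleftarrow H$ meets $\widetilde Y$ within combinatorial distance $r$; call this family $\mathcal H_r$.

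The lower inclusion is then immediate, and the content is the upper inclusion: to bound $\dist(x,\widetilde Y)$ uniformly for $x\in\widetilde Y^{+r}$. Passing to the gate (nearest-point projection) $g$ of $x$ in $\widetilde Y$, the quantity $\dist(x,\widetilde Y)$ equals the number $k$ of hyperplanes separating $x$ from $\widetilde Y$, and by the previous paragraph each such hyperplane lies in $\mathcal H_r$. So everything reduces to bounding how many members of $\mathcal H_r$ can simultaneously separate a single point from $\widetilde Y$.

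This counting is the main obstacle, and I would resolve it with two observations. First, any two hyperplanes separating $x$ from $\widetilde Y$ are either crossing or nested: if they were disjoint, then since $\widetilde Y$ and $x$ occupy opposite halfspaces of each, one far halfspace must lie inside the other. Pairwise-crossing hyperplanes span a cube (apply Proposition~\ref{prop:helly} to their carriers, cf.\ \cite{Sageev95}), so any crossing subfamily has at most $n$ members; equivalently, the nesting partial order on the $k$ separating hyperplanes has antichains of size at most $n$, and by Dilworth's theorem the family splits into at most $n$ nested chains. Second, in a nested chain $H_{(1)}\prec\cdots\prec H_{(\ell)}$ of hyperplanes all separating $x$ from $\widetilde Y$, any combinatorial geodesic from $\widetilde Y$ to the far side of the innermost hyperplane must cross all $\ell$ of them, so $\dist\big(\widetilde Y,\overleftarrow{H_{(\ell)}}\big)\ge \ell$; since $H_{(\ell)}\in\mathcal H_r$ this forces $\ell\le r$. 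Combining, $k\le n\,r$, so $\widetilde Y^{+r}\subseteq\neb^{c}_{nr}(\widetilde Y)$ and we may take $s=nr$ (after the metric adjustment). The one place demanding care is the bookkeeping between the two distance conventions, namely distance to a hyperplane versus distance to its far halfspace: the definition of $\mathcal H_r$ must use the latter, which is precisely what the membership condition $\neb^{c}_{r}(\widetilde Y)\subseteq\overrightarrow H$ encodes.
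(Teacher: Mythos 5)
Your argument is correct, but note that the paper offers no proof of this statement at all: it is imported verbatim from \cite[Lem~13.15]{HaglundWiseSpecial}, so there is no internal argument to compare against. Your self-contained proof is sound. Taking $\widetilde Y^{+r}$ to be the hull of the combinatorial $r$-neighborhood makes the lower inclusion trivial, and your identification of the separating hyperplanes is right: if $H$ separates $x\in\hull(\neb_r(\widetilde Y))$ from $\widetilde Y$, then $H$ misses $\widetilde Y$ and its far open halfspace contains a $0$-cube within combinatorial distance $r$ of $\widetilde Y$. The Dilworth step also works: two non-crossing hyperplanes that both separate $x$ from $\widetilde Y$ have nested far \emph{open} halfspaces (each hyperplane, being connected and disjoint from the other, lies entirely in one of its open halfspaces, and the two ``mixed'' quadrants cannot both be nonempty), so antichains are pairwise-crossing families of size at most $n=\dimension\widetilde X$, and the innermost member of each chain forces the chain length to be at most $r$. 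Hence $s$ can be taken comparable to $nr$.

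Two small caveats. First, your bound genuinely uses $\dimension\widetilde X<\infty$, a hypothesis absent from the statement of the lemma but present in every application in the paper (proper cocompact actions) and in the original Haglund--Wise formulation; you should state it explicitly. Second, the parenthetical ``apply Proposition~\ref{prop:helly} to their carriers'' is slightly off: pairwise-intersecting carriers do not directly yield a common cube. The clean route, which the paper itself uses in Section~\ref{FlatDual}, is either Sageev's lemma that $m$ pairwise crossing hyperplanes span an $m$-cube, or the Helly property applied to the hyperplanes regarded as convex subcomplexes of the cubical subdivision. For comparison with the literature: the usual proof of Haglund--Wise instead iterates the ``first cubical neighborhood'' (the union of all closed cubes meeting $\widetilde Y$), which is again convex when $\widetilde Y$ is, and takes $\widetilde Y^{+r}$ to be the $r$-fold iterate; your hull-plus-chain-decomposition argument reaches the same conclusion by a different but equally serviceable convexity mechanism.
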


 We infer the following from the above results.

\begin{lem}\label{lem:convex BP}
Let $G$ act properly and cocompactly on a CAT(0) cube complex $\widetilde X$.
Let $H$ be a subgroup that cocompactly stabilizes a nonempty convex subcomplex $\widetilde Y\subset \widetilde X$.
Then $H$ has bounded packing in $G$.
\end{lem}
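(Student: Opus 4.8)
The plan is to show that $H$ has bounded packing by transferring the cocompactness hypothesis into a statement about how many distinct cosets $g_iH$ can have their translates $g_i \widetilde Y$ pairwise close, and then invoking the Helly property together with the convex thickening of Lemma~\ref{lem:r thickening}. The underlying principle is that bounded packing for $H$ is really a statement about the convex subcomplex $\widetilde Y$, since $H$ acts cocompactly on $\widetilde Y$ and the orbit map $H \to \widetilde Y$ is a quasi-isometry.

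\textbf{Reduction to the cube complex.} First I would fix a basepoint $x_0 \in \widetilde Y$ and use the quasi-isometric embedding $g \mapsto g x_0$ (which holds because $G$ acts properly and cocompactly, hence by the \v{S}varc--Milnor lemma) to translate the definition of bounded packing from the Cayley graph $\Upsilon$ into $\widetilde X$, exactly as in the displayed equivalence at the start of Section~\ref{BPproperty}. Because $H$ stabilizes $\widetilde Y$ cocompactly, the set $H x_0$ is coarsely dense in $\widetilde Y$: there is a constant $D \geq 0$ with $\neb_D(H x_0) \supseteq \widetilde Y$ and $\widetilde Y \supseteq H x_0$. Consequently, for any two cosets, the distance $\dist_{\widetilde X}(g_i H x_0, g_j H x_0)$ is within $2D$ of $\dist_{\widetilde X}(g_i \widetilde Y, g_j \widetilde Y)$. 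Thus it suffices to bound the number of translates $g_1 \widetilde Y, \ldots, g_m \widetilde Y$ that can be pairwise within distance $r$ of one another, since two of them being within distance $r$ forces the corresponding cosets to be within $r + 2D$.

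\textbf{Packing the convex translates.} Given $r > 0$, apply Lemma~\ref{lem:r thickening} to produce a convex subcomplex $\widetilde Y^{+r}$ with $\neb_r(\widetilde Y) \subseteq \widetilde Y^{+r} \subseteq \neb_s(\widetilde Y)$. Each translate $g_i \widetilde Y^{+r}$ is again convex, and if $\dist_{\widetilde X}(g_i \widetilde Y, g_j \widetilde Y) \leq r$ then $g_i \widetilde Y^{+r} \cap g_j \widetilde Y^{+r} \neq \emptyset$. So if we have $m$ cosets that pairwise fail the separation condition, the convex subcomplexes $g_1 \widetilde Y^{+r}, \ldots, g_m \widetilde Y^{+r}$ pairwise intersect, and the Helly property (Proposition~\ref{prop:helly}) gives a common point $z \in \bigcap_{i=1}^m g_i \widetilde Y^{+r}$. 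Then each $g_i \widetilde Y$ passes within distance $s$ of $z$, so every $g_i H x_0$ lies in the ball $\neb_{s + D}(z)$. A proper cocompact action on a CAT(0) cube complex implies that balls of fixed radius meet only boundedly many distinct $H$-orbits of the relevant type; more precisely, the number of distinct cosets $g_i H$ whose orbit meets a ball of radius $s+D$ is bounded by a constant $N = N(s+D)$ depending only on $r$ (through $s$) and the geometry of the action. Setting $m(r) = N + 1$ then guarantees that among any $m(r)$ distinct cosets, two must be separated by more than $r$, which is the desired bounded packing.

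\textbf{The main obstacle.} The step requiring the most care is the final counting argument: deducing from a single common point $z$ that only boundedly many \emph{distinct} cosets $g_i H$ can place their orbits in a fixed-radius ball around $z$. One must rule out that many different cosets collapse to nearby orbits, which uses properness and cocompactness of the $G$-action to control the multiplicity — essentially that $G$ acts with finite stabilizers and finitely many orbits of cells, so a bounded region contains boundedly many points of the form $g_i x_0$ coming from distinct cosets. I would phrase this as a standard consequence of the quasi-isometric orbit map, perhaps citing \cite{HruskaWisePacking} for the precise coset-counting formulation. The convexity bookkeeping and the interplay between the three constants $D$, $r$, and $s$ are routine once this multiplicity bound is in place.
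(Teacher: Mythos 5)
Your proposal is correct and follows essentially the same route as the paper: thicken $\widetilde Y$ to a convex $\widetilde Y^{+r}$ via Lemma~\ref{lem:r thickening}, convert pairwise closeness of cosets into pairwise intersection of the translates $g_i\widetilde Y^{+r}$, apply the Helly property to get a common point, and bound the multiplicity using properness/cocompactness. The only slip is the phrase ``every $g_iHx_0$ lies in the ball $\neb_{s+D}(z)$,'' which should say that each orbit $g_iHx_0$ \emph{meets} that ball (the orbits are generally unbounded), as your own final counting step already assumes.
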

\begin{proof}
Let $x_o \in \widetilde Y$.
Let $g_1, g_2, \ldots$ be an enumeration of the left coset representatives of $H$.
Let $\widetilde Y^{+r}$ be as in Lemma~\ref{lem:r thickening}.
Observe that if $\dist(g_jHx_o, g_kHx_o)< r$ then $g_j \widetilde Y^{+r} \cap g_k \widetilde Y^{+r} \neq \emptyset$.
Thus, to show that $H$ has bounded packing, it suffices to find an upper bound on the number of distinct
cosets $g_iH$ such that $\{g_i\widetilde Y^{+r}\}$ pairwise intersect.
Moreover,
by Proposition~\ref{prop:helly}, if $\{g_1\widetilde Y^{+r}, \ldots, g_p\widetilde Y^{+r} \}$ pairwise intersect then  $\cap_{i=1}^p g_i\widetilde Y^{+r}\neq \emptyset$.
It thus suffices to show that there is an upper bound $m$ on the multiplicity of
$\{g_i\widetilde Y^{+r} : g_iH \in G / H \}$. However this collection of sets is uniformly locally finite since
 $\widetilde Y^{+r}$ is $H$-cocompact and $[\stabilizer(Y^{+r}):H]< \infty$.
\end{proof}

If $A$ is an abelian group acting by isometries on a metric space $\widetilde{X}$, then $\min(A)$ is the set of all $\widetilde{x} \in \widetilde{X}$ such that $\dist(a\widetilde{x}, \widetilde{x}) \leq \dist(a\widetilde{y}, \widetilde{y})$ for all $a \in A$ and $\widetilde{y} \in \widetilde{X}$.
We refer to \cite{BridsonHaefliger} for the following:

\begin{prop}[Flat torus theorem]\label{prop:FPT}
Let $A$ be a virtually free-abelian group of rank~$n$ acting metrically properly and semisimply on a CAT(0) space $\widetilde X$.
There exists a subspace $V\times F \subset \widetilde X$ with $F$ isometric to $\Euclidean^n$
such that  $A$ stabilizes $V\times F$ and acts as: \  $a(v,f)=(v,af)$ for all $(v,f)\in V\times F$ and $a \in A$.
Moreover, if $A \cong \mathbb{Z}^n $ then $ \min(A)= V\times F$.
\end{prop}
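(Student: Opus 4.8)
The plan is to deduce the statement from the structure theory of semisimple isometries of $\textsc{cat}(0)$ spaces, by induction on the rank $n$. I will use two standard facts (see \cite{BridsonHaefliger}): first, if $\gamma$ is a semisimple isometry with translation length $|\gamma|>0$, then $\min(\gamma)=\{x:\dist(x,\gamma x)=|\gamma|\}$ is a nonempty closed convex subspace splitting isometrically as $Y_\gamma\times\reals$, with $\gamma$ acting as the identity on $Y_\gamma$ and as translation by $|\gamma|$ on the $\reals$ factor, the lines $\{y\}\times\reals$ being exactly the axes of $\gamma$; second, if $\alpha$ commutes with $\gamma$ then $\alpha\min(\gamma)=\min(\alpha\gamma\alpha^{-1})=\min(\gamma)$ and $\alpha$ carries axes to axes, hence preserves the splitting and acts as a product isometry $(\alpha',\alpha'')$ with $\alpha''$ a translation; moreover $\min(\alpha)$ meets $\min(\gamma)$. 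I first treat $A\cong\integers^n$. Since the action is metrically proper and $A$ is torsion free, every nontrivial $a\in A$ is hyperbolic, as an elliptic element would have bounded orbits and force $\{a^m\}$ to have bounded displacement. For $n=1$ the first fact gives $\min(A)=\min(a)=V\times F$ with $V=Y_a$ and $F=\reals\cong\Euclidean^1$, with $A$ acting by translation on $F$.

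For the inductive step choose $a_1\in A$ and write $\min(a_1)=Y_1\times\reals_1$. By the commuting fact every element of $A$ preserves this splitting, giving homomorphisms $\pi_1\colon A\to\Isom(Y_1)$ and a translation part $\rho_1\colon A\to\reals$. Metric properness forces $\rho_1(\ker\pi_1)$ to be discrete, hence cyclic, generated by some $c>0$, so $\ker\pi_1\cong\integers$ and $\bar A:=\pi_1(A)$ has rank $n-1$. Each $\pi_1(a)$ is semisimple, since $\min(a)\cap\min(a_1)=\min_{Y_1}(\pi_1(a))\times\reals_1$ is nonempty. The crux is that $\bar A$ acts metrically properly on $Y_1$: given infinitely many distinct $\bar g\in\bar A$ with $\dist_{Y_1}(\bar g y_0,y_0)\le R$, pick for each the lift $g\in A$ minimizing $|\rho_1|$ on its coset, so $|\rho_1(g)|\le\tfrac{c}{2}$; then $\dist_{\widetilde X}(gz_0,z_0)^2=\dist_{Y_1}(\bar g y_0,y_0)^2+\rho_1(g)^2$ stays bounded while the $g$ remain distinct, contradicting properness of $A$ on $\widetilde X$. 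Thus $\bar A$ is virtually $\integers^{\,n-1}$ and acts properly and semisimply on the $\textsc{cat}(0)$ space $Y_1$, so the inductive hypothesis yields $V'\times F'\subseteq Y_1$ with $F'\cong\Euclidean^{\,n-1}$, stabilized by $\bar A$ acting trivially on $V'$ and by the product action on $F'$. Setting $V:=V'$ and $F:=F'\times\reals_1\cong\Euclidean^{\,n}$, the subspace $V\times F\subseteq\min(a_1)\subseteq\widetilde X$ is $A$-invariant and $A$ acts as $a(v,f)=(v,af)$. When $A\cong\integers^n$ the same induction computes the min set on the nose: intersecting $\min(a)\cap\min(a_1)=\min_{Y_1}(\pi_1(a))\times\reals_1$ over all $a$ gives $\min(A)=\big(\bigcap_{a}\min_{Y_1}(\pi_1(a))\big)\times\reals_1=\min_{Y_1}(\bar A)\times\reals_1$, and the moreover clause of the inductive hypothesis identifies this with $V\times F$.

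It remains to pass from $A\cong\integers^n$ to a general virtually free-abelian $A$ of rank $n$. Choose a finite-index normal subgroup $A_0\trianglelefteq A$ with $A_0\cong\integers^n$ and let $\min(A_0)=V_0\times F_0$ as above. For $g\in A$ normality gives $g\min(A_0)=\bigcap_{a\in A_0}\min(gag^{-1})=\min(A_0)$, so $A$ stabilizes $\min(A_0)$; since conjugation permutes the axes of elements of $A_0$ it preserves the canonical factor $F_0$ and the splitting, acting as a product. The $V_0$-component gives an action of the finite group $A/A_0$ on the complete $\textsc{cat}(0)$ space $V_0$, which therefore fixes a nonempty convex set $V:=\mathrm{Fix}_{V_0}(A/A_0)$ by the $\textsc{cat}(0)$ fixed-point theorem (the circumcentre of a bounded orbit is fixed). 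Then $V\times F_0$ is $A$-invariant, $A$ acts trivially on $V$, and with $F:=F_0\cong\Euclidean^{\,n}$ we obtain $a(v,f)=(v,af)$, as required.

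The main obstacle is the inductive descent: ensuring that the action induced on the $Y_1$-factor of $\min(a_1)$ remains metrically proper, semisimple, and virtually abelian of rank $n-1$, so that the induction closes. Semisimplicity and the product form of the action are immediate from the two structural facts, but \emph{properness} is delicate, since an element of $A$ may act elliptically on $Y_1$ while translating in the $\reals_1$ direction; this is resolved by the minimal-lift argument above together with the discreteness of $\rho_1(\ker\pi_1)$. A secondary technical point, needed to reassemble $\min(A)$ and to justify the moreover clause, is the identity $\min(a)\cap\min(a_1)=\min_{Y_1}(\pi_1(a))\times\reals_1$ — that is, that $a$ attains its global translation length somewhere on $\min(a_1)$ — which rests on the fact that $a$ and $a_1$ commute, so $\min(a)$ is $a_1$-invariant and meets $\min(a_1)$.
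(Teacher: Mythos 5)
The paper does not actually prove Proposition~\ref{prop:FPT}: it is quoted as a known result with the sentence ``We refer to \cite{BridsonHaefliger} for the following,'' so there is no in-paper argument to compare yours against. What you have written is, in substance, the standard Bridson--Haefliger proof (II.6.8--6.12 and II.7.1--7.2): induction on rank via the splitting $\min(\gamma)=Y_\gamma\times\reals$, the product form of commuting isometries, the nearest-point-projection argument for properness and for $\min(\alpha)\cap\min(\gamma)\neq\emptyset$, and the circumcentre fixed-point argument to absorb the finite quotient. That is the right route, and the delicate points you single out (properness of the induced action on $Y_1$, and the identity $\min(a)\cap\min(a_1)=\min_{Y_1}(\pi_1(a))\times\reals_1$) are exactly the ones that need care; your minimal-lift argument for properness is correct. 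One wrinkle to patch: the quotient $\bar A=A/\ker\pi_1$ need not be torsion-free even when $A\cong\integers^n$, because $\ker\pi_1\cong\integers$ need not be isolated in $A$ (an element $a$ with $a^k\in\ker\pi_1$ may act on $Y_1$ as a nontrivial finite-order isometry). Consequently the ``moreover'' clause of your inductive hypothesis, which you state only for honestly free-abelian groups, cannot be invoked verbatim to identify $\min_{Y_1}(\bar A)$ with $V'\times F'$. The fix is standard: strengthen the inductive statement to assert $\bigcap_{a\in A}\min(a)=V\times F$ for all virtually free-abelian $A$ of rank $n$ (this holds because every $a\in A$ acts on the $F$-factor as a translation --- its linear part commutes with a full lattice of translations and hence is trivial --- while its $V$-component has finite order and its fixed set cuts out $V$). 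You also tacitly use completeness of $\widetilde X$ (for projections and circumcentres); this is harmless here since the paper only applies the proposition to CAT(0) cube complexes.
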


\begin{thm}[Cubical flat torus theorem]\label{thm:cocompact cubical flat}
Let $G$ act properly and cocompactly on a CAT(0) cube complex $\widetilde X$.
Let $A$ be a highest virtually abelian subgroup of $G$ and let $p=\rank(A)$.
Then $A$ acts properly and cocompactly on a convex subcomplex $\widetilde Y \subseteq \widetilde X$ such that $\widetilde Y \cong \prod_{i=1}^p C_i$ where each $C_i$ is a quasiline.
\end{thm}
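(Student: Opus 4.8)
The plan is to obtain the flat required by Theorem~\ref{thm:hullTheorem} from the classical Flat Torus Theorem, feed it into that dichotomy, and then use the highest hypothesis only to eliminate the degenerate alternative. First I would set up the flat. Since $G$ acts properly and cocompactly on the CAT(0) cube complex $\widetilde X$, its isometries are semisimple, so $A$ acts metrically properly and semisimply and Proposition~\ref{prop:FPT} supplies a subspace $V\times F\subseteq\widetilde X$ stabilized by $A$, with $F\cong\Euclidean^p$, on which $A$ acts by translations (trivially on $V$). Properness forces the translation action on $F$ to be by a rank-$p$ lattice, so $A$ acts properly and cocompactly on the flat $E:=F$. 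This is precisely the hypothesis of Theorem~\ref{thm:hullTheorem}.

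Applying Theorem~\ref{thm:hullTheorem} to $A$ acting on $E$ yields its dichotomy. In possibility~\eqref{poss2} the proof is essentially finished: I take $\widetilde Y=\hull(E)$, which is convex by the definition of $\hull$, is $A$-cocompact, and is isomorphic to a product $\prod_{i=1}^p C_i$ of quasilines, while the $A$-action is proper because the $G$-action is. So the whole task reduces to excluding possibility~\eqref{poss1}, in which some finite-index $B\leqslant A$ has $\min(B)\cap\hull(E)$ non-$B$-cocompact. After replacing $B$ by a finite-index free-abelian subgroup I may assume $\min(B)=V\times F$ with $E=\{v_0\}\times F$ and $B$ translating $F$.

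To rule out possibility~\eqref{poss1} I would argue that it contradicts the highest hypothesis. Assuming $\min(B)\cap\hull(E)$ were non-$B$-cocompact, non-cocompactness produces points $x_n\in\min(B)\cap\hull(E)$ whose $V$-coordinates leave every bounded subset of $V$. Through $x_n$ runs the parallel flat $E_n=\{v_n\}\times F\subseteq\min(B)$; since $Bx_n\subseteq\hull(E)$ is cocompact in $E_n$ and $\hull(E)$ is convex and $B$-invariant, in fact $E_n\subseteq\hull(E)$, and the geodesics joining $E$ to $E_n$ are transverse to these flats with lengths tending to infinity. Using cocompactness of the $G$-action I would translate $x_n$ into a fixed compact set by elements $g_n\in G$; because $B$ acts on every $E_n$ with the same translation lengths as on $E$, properness makes the conjugates $g_nBg_n^{-1}$ repeat, so along a subsequence they equal a fixed group $B_\ast$ stabilizing cocompactly a limit flat $E_\ast=\lim g_nE_n$, while the translated transverse geodesics converge to a line $L$ transverse to $E_\ast$, giving a $(p+1)$-dimensional flat $\hull(E_\ast\cup L)$.

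The genuinely delicate point, which I expect to be the main obstacle, is to promote this geometric $(p+1)$-flat into an actual subgroup. Comparing $g_n$ and $g_m$ along the subsequence yields elements $g_ng_m^{-1}\in G$ that almost stabilize $E_\ast$ and translate along $L$; invoking properness once more I would extract from these an element $t$ normalizing $B_\ast$ with a nontrivial translation component transverse to $E_\ast$, so that $\langle B_\ast,t\rangle$ is virtually abelian of rank $p+1$ and contains a finite-index subgroup of a conjugate of $B$. This contradicts $A$ being highest. Hence possibility~\eqref{poss1} cannot occur, possibility~\eqref{poss2} holds, and $\widetilde Y=\hull(E)$ is the desired $A$-cocompact convex product of quasilines.
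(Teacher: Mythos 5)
Your first two steps coincide with the paper's proof: Proposition~\ref{prop:FPT} produces the $A$-invariant flat $E$, Theorem~\ref{thm:hullTheorem} gives the dichotomy, and alternative~\eqref{poss2} immediately yields $\widetilde Y=\hull(E)$. The entire content is therefore in excluding alternative~\eqref{poss1}, and here your route diverges from the paper's. The paper's argument is a finiteness count: writing $\min(B)=V\times F$ with $\diameter(V)=\infty$, it considers for each $v\in V$ the smallest $B$-invariant connected subcomplex $N(\{v\}\times F)$, observes that the number of $B$-orbits of cells in each is bounded independently of $v$, deduces that there are only finitely many $G$-orbits of such subcomplexes, and so finds a single $g\in G$ carrying $N(\{v_1\}\times F)$ to a disjoint $N(\{v_2\}\times F)$; then $\langle g,B\rangle$ stabilizes $\bigsqcup_n g^nN(\{v_1\}\times F)$, which is quasi-isometric to $\Euclidean^{p+1}$, contradicting highestness. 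Your version replaces this with a limiting argument (repeating conjugates $g_nBg_n^{-1}$, limit flats, convergent transverse geodesics), which is workable in outline but is exactly where the proof gets hard.

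The gap sits at the point you yourself flag, and it has two parts. First, you only ask that $t$ \emph{normalize} $B_\ast$; that alone gives a polycyclic group $\langle B_\ast,t\rangle$, which need not be virtually abelian unless you additionally invoke the Solvable Subgroup Theorem --- better is to run the properness pigeonhole on the finitely many generators $b$ of $B$ so that $g_nbg_n^{-1}=g_mbg_m^{-1}$ along a subsequence, whence $t=g_m^{-1}g_n$ \emph{centralizes} $B$ and $\langle B,t\rangle$ is honestly abelian. Second, and more seriously, ``$t$ has a nontrivial translation component transverse to $E_\ast$'' does not yield rank $p+1$. Knowing only that $t$ moves the single point $x_n$ a long way in the $V$-direction is compatible with the $V$-component of $t$ being an elliptic isometry of $V$ (a finite-order motion about a far-away center), in which case a power of $t$ can lie virtually in $B_\ast$ and the rank does not increase; so the claim that $\langle B_\ast,t\rangle$ contradicts highestness is asserted rather than proved. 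To close this you must either exploit infinitely many of the comparison elements $g_m^{-1}g_n$ simultaneously, or do what the paper does and exhibit an explicit $\langle g,B\rangle$-invariant subspace quasi-isometric to $\Euclidean^{p+1}$.
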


\begin{proof}
By Proposition~\ref{prop:FPT}, $A$ stabilizes $E\subset \widetilde X$, where $E$ is isometric to $\Euclidean^p$.
By Theorem~\ref{thm:hullTheorem}, either $\hull(E) \cong \prod_{i=1}^p C_i$ is $A$-cocompact where each $C_i$ is a quasiline, or there exists a finite index free-abelian subgroup $B \leqslant A$ such that $\min(B) \cap \hull(E)$ is not $B$-cocompact.
  We shall show that the second possibility contradicts that $A$ is highest.

Applying Proposition~\ref{prop:FPT} again, let $\min(B) =V\times F$, where $\diameter(V) = \infty$.
For $v\in V$ let $N(\{v\}\times F)$ denote the smallest $B$-invariant connected subcomplex of $\widetilde X$ containing $\{v\}\times F$.
Since $\{v\}\times F$ is $B$-cocompact, so is $N(\{v\}\times F)$.
Moreover, the number of $B$-orbits of cells in $N(\{v\}\times F)$  is bounded by a constant independent of $v\in V$.
Indeed, by $B$-cocompactness,  there is $m>0$ such that $F = B\neb_m(f )$ for each $f \in F$.
For each $v$ there is a $B$-equivariant isometry $F \rightarrow \{v\}\times F$, and so $\{v\}\times F$ is likewise covered by the $B$ translates of each $m$-ball.
However, the number of cells intersecting an $m$-ball in $\widetilde X$ is finite by properness and cocompactness.
So the number of $B$-orbits of cells in $N(\{v\}\times F)$ has the same upper bound.

It follows that there are finitely many $G$-orbits of subcomplexes $N(\{v\}\times F)$.
As $\diameter(V)=\infty$, there are points $v_1,v_2\in V$ and $g\in G$ such that $N(\{v_1\}\times F)\cap N(\{v_2\}\times F)=\emptyset $, but
$gN(\{v_1\}\times F)=N(\{v_2\}\times F)$.
Both $B$ and $g$ stabilize $\sqcup g^n N(\{v_1\}\times F)$ which is quasi-isometric to $\Euclidean^{n+1}$. Hence $\langle g, B\rangle$ is a higher rank virtually abelian subgroup.
\end{proof}

\begin{thm}\label{thm:bounded packing abelian cubical}
Let $G$ act properly and cocompactly on a CAT(0) cube complex $\widetilde X$.
Let $A$ be an abelian subgroup of $G$. Then $A$ has bounded packing in $G$.
\end{thm}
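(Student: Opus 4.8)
The plan is to reduce the general abelian case to the case of a \emph{highest} virtually abelian subgroup, which is already in hand: by Theorem~\ref{thm:cocompact cubical flat} a highest virtually abelian subgroup acts properly and cocompactly on a convex subcomplex $\widetilde Y\cong\prod_i C_i$, and then Lemma~\ref{lem:convex BP} immediately yields that such a subgroup has bounded packing in $G$. The remaining task is purely group-theoretic: to connect an arbitrary abelian $A$ to a suitable highest subgroup using the transfer principles of Lemma~\ref{lem:finite index subnormal}.

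First I would record that $A$ is finitely generated and virtually $\mathbb{Z}^p$, a standard property of abelian subgroups of groups acting properly and cocompactly on a CAT(0) space, with $p\leqslant\dimension(\widetilde X)$ since Proposition~\ref{prop:FPT} produces an isometrically embedded $\Euclidean^p\subseteq\widetilde X$. Because the ranks of all virtually abelian subgroups of $G$ are bounded by $\dimension(\widetilde X)$ for the same reason, I can choose a virtually abelian subgroup $\hat A$ of \emph{maximal} rank among those containing a finite index subgroup of $A$. A short argument, intersecting a hypothetical higher-rank overgroup of a finite index subgroup of $\hat A$ with the given finite index subgroup of $A$, shows that $\hat A$ is highest and that $A$ has a finite index subgroup $A'\leqslant\hat A$.

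The key maneuver is then to replace $\hat A$ by a finite index \emph{free abelian} subgroup $\hat A_0\cong\mathbb{Z}^q$ (which exists by Bieberbach and is again highest, being finite index in the highest $\hat A$). Set $A''=A'\cap\hat A_0$, which has finite index in $A'$, hence in $A$. Applying Theorem~\ref{thm:cocompact cubical flat} to the highest subgroup $\hat A_0$ and then Lemma~\ref{lem:convex BP} shows that $\hat A_0$ has bounded packing in $G$. Since $\hat A_0$ is abelian, the subgroup $A''$ is automatically normal in $\hat A_0$, so the first part of Lemma~\ref{lem:finite index subnormal} gives that $A''$ has bounded packing. Finally $A''\leqslant A$ with $[A:A'']<\infty$, so the second part of Lemma~\ref{lem:finite index subnormal}, taking $H=A''$, $K=A$, and $K\cap H=A''$, promotes bounded packing from $A''$ up to $A$.

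The substance of the theorem is carried entirely by Theorem~\ref{thm:cocompact cubical flat}; the reduction above is essentially bookkeeping, and its one genuinely delicate point is that an arbitrary abelian $A$ is neither highest nor normal in its highest overgroup $\hat A$. Passing from $\hat A$ to its free abelian part $\hat A_0$ is precisely what resolves this, since inside an abelian group every subgroup is normal, which is exactly the hypothesis needed to invoke the normal-subgroup transfer of Lemma~\ref{lem:finite index subnormal}. I expect verifying that $\hat A$, and hence $\hat A_0$, is highest to be the only step requiring real care.
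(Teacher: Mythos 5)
Your proposal is correct and follows essentially the same route as the paper's (very terse) proof: pass to a highest virtually free-abelian overgroup of a finite index subgroup of $A$, apply Theorem~\ref{thm:cocompact cubical flat} and Lemma~\ref{lem:convex BP} to get bounded packing there, and transfer back to $A$ via the two parts of Lemma~\ref{lem:finite index subnormal}. The only difference is that you spell out the bookkeeping (maximality of rank implies highest, passing to the free-abelian part so that normality is automatic) that the paper leaves implicit.
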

\begin{proof}
By Proposition~\ref{prop:FPT} and the assumption that $\widetilde{X}$ is finite dimensional we can find a highest virtually free-abelian group $A'$ that contains a finite index subgroup of $A$.
The result now follows by combining Lemmas~\ref{lem:finite index subnormal},~\ref{lem:convex BP}, and Theorem~\ref{thm:cocompact cubical flat}.
\end{proof}

\section{Subproduct intersections} \label{sec:restricted intersection}
This section illustrates the following consequence of Theorem~\ref{thm:cocompact cubical flat}:

\begin{thm} \label{thm:highestIntersectionSubgroups}
 Let $G$ act properly and cocompactly on a CAT(0) cube complex $\widetilde{X}$.
 Let $A \leqslant G$ be a highest free-abelian subgroup, and let $p=\rank(A)$.
 There is a set $S = \{\hat a_1, \ldots, \hat a_p\} \subseteq A$ such that the following holds:
  For any highest free-abelian subgroup $A' \leqslant G$, the intersection
   $A'\cap A$ is commensurable to a subgroup generated by a subset of $S$.
\end{thm}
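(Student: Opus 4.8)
The plan is to first equip the product from Theorem~\ref{thm:cocompact cubical flat} with a generating set adapted to the factors. Applying that theorem to $A$, I obtain a convex $A$-cocompact subcomplex $\hull(E)\cong\prod_{i=1}^{p}C_i$ with each $C_i$ a quasiline. Passing to a finite index subgroup $A_0\leqslant A$ preserving each factor, the projection $A_0\to\Isom(C_i)$ is cocompact, hence has virtually cyclic image and a kernel $K_i$ of rank $p-1$ consisting of the elements acting elliptically on $C_i$. Since a rank $\geq 2$ subgroup cannot act properly on a quasiline, $\bigcap_{j\neq i}K_j$ has rank exactly $1$; I let $\hat a_i$ generate a finite index subgroup of it, so that $\hat a_i$ translates along $C_i$ and acts elliptically on $C_j$ for $j\neq i$. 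The $\hat a_i$ generate a finite index subgroup of $A$, and I set $S=\{\hat a_1,\ldots,\hat a_p\}$. The property I will use is that, up to finite index, $\hat a_i$ crosses exactly the hyperplanes dual to $C_i$, and a general element $\prod_i\hat a_i^{n_i}$ of $A$ is hyperbolic in $C_i$ precisely when $n_i\neq 0$.

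Now let $A'$ be a highest free-abelian subgroup and set $D=A\cap A'$; if $D$ is finite the statement holds with the empty subset, so assume $D$ is infinite. Let $I=\{\,i : D \text{ contains an element hyperbolic in }C_i\,\}$. For $i\notin I$ every element of $D$ is elliptic in $C_i$, so the translation direction in $E\cong\reals^{p}$ of each $d\in D$ has vanishing $e_i$-component, where $e_i$ denotes the axis direction of $\hat a_i$. Thus the translation subspace of $D$ lies in the coordinate subspace $\mathrm{span}(e_i:i\in I)=A_I\otimes\reals$, where $A_I:=\langle\hat a_i:i\in I\rangle$, and $D$ is commensurable to a subgroup of $A_I$. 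It therefore remains only to prove the reverse inequality $\rank(D)=|I|$, i.e.\ that $D$ has finite index in $A_I$.

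For this I consider the convex subcomplex $\widetilde Z=\hull(E)\cap\hull(E')$, which is $D$-invariant; after replacing the two hulls by convex thickenings (Lemma~\ref{lem:r thickening}) I may assume $\widetilde Z\neq\emptyset$, since both thickened hulls contain a common $d$-axis for any infinite order $d\in D$ and Proposition~\ref{prop:helly} then supplies a common point. Being a convex subcomplex of the product $\prod_i C_i$, the complex $\widetilde Z$ splits as a subproduct $\prod_i Z_i$ with each $Z_i\subseteq C_i$ convex. Granting that $D$ acts cocompactly on $\widetilde Z$, each factor $Z_i$ is unbounded exactly when $i\in I$ and bounded otherwise; thus $\widetilde Z$ is commensurable to $\prod_{i\in I}C_i$, and the cocompact $D$-action on it forces $D$ to be commensurable to $A_I$, as desired.

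The main obstacle is establishing the $D$-cocompactness of $\widetilde Z$. One cannot simply invoke cocompactness of $\hull(E)$, since that is only $A$-cocompact and $[A:D]$ is typically infinite; the content is precisely that in the transverse directions $C_k$ with $k\notin I$ the two hulls diverge, so their overlap stays bounded there. I would argue by contradiction: if $\widetilde Z$ were not $D$-cocompact, then its projection to some transverse factor $C_k$ with $k\notin I$ would be unbounded, producing hyperplanes dual to $C_k$ that cross $\hull(E')$ and whose $D$-translates are unbounded. Feeding such a family into the flat-pushing maps $\phi_k$ constructed in the proof of Theorem~\ref{thm:hullTheorem}, or running the Helly estimate of Lemma~\ref{lem:convex BP} directly, one manufactures a virtually abelian subgroup of rank $\rank(A')+1$ containing a finite index subgroup of $A'$, contradicting that $A'$ is highest. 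This divergence step, where both highest hypotheses are used, is the crux.
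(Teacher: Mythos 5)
Your overall architecture matches the paper's: apply Theorem~\ref{thm:cocompact cubical flat} to get an $A$-cocompact convex product of quasilines, extract a basis $\{\hat a_1,\ldots,\hat a_p\}$ adapted to the factors (your construction of the $\hat a_i$ via the elliptic kernels $K_i$ is essentially Lemma~\ref{lem:actionDecomposition}), thicken via Lemma~\ref{lem:r thickening} so the two cores meet, and read off $A\cap A'$ from the subproduct structure of the intersection (Lemma~\ref{lem:convexSubproduct}). Granting cocompactness of the $D$-action on the intersection, your endgame --- $D$ commensurable into $A_I$ by ellipticity in the transverse factors, plus $\rank(D)=|I|$ from proper cocompactness on something quasi-isometric to $\reals^{|I|}$ --- is a legitimate and in fact somewhat leaner route to the conclusion than the paper's analysis of shallow/deep halfspaces, the cores $E$, $E'$, and the auxiliary complex $E''$.

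The genuine gap is exactly the step you flag as the crux and then do not prove: the $D$-cocompactness of $\widetilde Z=\widetilde Y\cap(\widetilde Y')^{+r}$. Your sketch --- ``feeding such a family into the flat-pushing maps $\phi_k$\ldots one manufactures a virtually abelian subgroup of rank $\rank(A')+1$'' --- supplies no mechanism for producing the extra commuting element, and it only treats the case where a transverse factor $Z_k$, $k\notin I$, is unbounded, not the case where some $Z_i$ with $i\in I$ fails to be cocompact under $\langle\hat a_i\rangle\cap D$. Moreover the difficulty is misdiagnosed: no divergence argument and no use of the highest hypotheses is needed here. The paper gets this cocompactness in one line from a fiber-product argument: $\hat A\backslash\widetilde Y$ and $\hat A'\backslash(\widetilde Y')^{+r}$ are compact, so their fiber product over $G\backslash\widetilde X$ is compact, and $\widetilde Y\cap(\widetilde Y')^{+r}$ is the universal cover of one of its components with deck group $\hat A\cap\hat A'$. (The highest hypotheses enter only earlier, through Theorem~\ref{thm:cocompact cubical flat}, to produce the cocompact convex cores for $A$ and $A'$ in the first place.) With that standard argument substituted for your contradiction sketch, your proof closes up; as written, the central claim is asserted rather than established.
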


Proving Theorem~\ref{thm:highestIntersectionSubgroups} requires the following consequence of the flat torus theorem.

\begin{lem} \label{lem:actionDecomposition}
 Let $A$ be a rank~$p$ virtually abelian group acting properly and cocompactly on a CAT(0) cube complex $\prod_{i=1}^p C_i$, where each $C_i$ is a quasiline.
 Then there exists a finite index free-abelian subgroup $\hat{A} \leqslant A$ with basis $\{\hat{a}_1,\ldots, \hat{a}_p\}$ such that $\hat{a}_i \cdot (c_1, \ldots , c_i, \ldots, c_p) = (c_1 ,\ldots, \hat{a}_i \cdot c_i, \ldots c_p)$ for each $i$.
\end{lem}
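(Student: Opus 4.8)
The goal is to show that a virtually abelian group $A$ acting properly and cocompactly on a product of quasilines $\prod_{i=1}^p C_i$ admits a finite index free-abelian subgroup whose basis elements act coordinate-wise, each moving only a single factor. The plan is to exploit the rigidity of products of CAT(0) cube complexes: by a theorem on the structure of isometry groups of products of CAT(0) spaces (or, combinatorially, by the fact that hyperplanes of $\prod C_i$ are products of a hyperplane of one factor with the other factors in full), every automorphism of $\prod C_i$ permutes the factors, possibly nontrivially, up to isometries of the individual factors. First I would pass to the finite index subgroup $A_0 \leqslant A$ that acts trivially on the (finite) set of factors, i.e.\ the kernel of the permutation action $A \to \mathrm{Sym}(p)$; since $A$ is finitely generated and virtually abelian, and the symmetric group is finite, $A_0$ still has finite index and is virtually abelian of rank $p$. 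Passing to a further finite index free-abelian subgroup via Bieberbach (\cite{RatcliffeBook}), I may assume $A_0 \cong \integers^p$ acts by translations and preserves each factor $C_i$, inducing a homomorphism $\rho_i : A_0 \to \Isom(C_i)$ for each $i$.

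Since each $C_i$ is a quasiline, the image $\rho_i(A_0)$ is a finitely generated abelian group acting properly on a quasiline, hence is virtually cyclic, so $\rho_i(A_0) \cong \integers$ up to finite index. The key point is to control the combined kernel structure. Each $\rho_i$ has kernel $K_i \leqslant A_0$ with $A_0 / K_i$ virtually cyclic, so $\mrank(K_i) \geq p-1$; properness of the product action forces $\bigcap_{i=1}^p K_i$ to be trivial (an element acting trivially on every factor acts trivially on the product, contradicting properness unless it is the identity). The plan is then to realize $A_0$, up to finite index, as a subgroup of $\prod_{i=1}^p (A_0 / K_i)$ via the diagonal map $a \mapsto (\rho_1(a), \ldots, \rho_p(a))$; this map is injective on a finite index subgroup because its kernel is $\bigcap K_i$, which is finite (indeed trivial here). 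Each quotient $A_0 / K_i$ being virtually $\integers$, a finite index subgroup of $A_0$ embeds in $\integers^p = \prod_i \integers$ acting coordinate-wise.

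The core of the argument is then linear algebra over $\integers$. I would choose, for each $i$, an element $\hat a_i \in A_0$ whose $i$-th coordinate $\rho_i(\hat a_i)$ generates a finite index subgroup of $\rho_i(A_0) \cong \integers$ while lying in $K_j$ for all $j \neq i$, so that $\hat a_i$ acts only on $C_i$. Such elements exist after passing to finite index: since the images $\rho_i(A_0)$ generate a finite index subgroup of $\integers^p$, one can clear the off-diagonal entries by a standard elimination over $\integers$ (equivalently, put the $p \times p$ integer matrix recording the action into a diagonal-dominant or Smith-like form), yielding a basis $\{\hat a_1, \ldots, \hat a_p\}$ of a finite index subgroup $\hat A \leqslant A_0$ with $\rho_j(\hat a_i) = 1$ for $j \neq i$. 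By construction $\hat a_i \cdot (c_1, \ldots, c_i, \ldots, c_p) = (c_1, \ldots, \hat a_i \cdot c_i, \ldots, c_p)$, as required, and $[\hat A : A]$ is finite since we only ever passed to finite index subgroups.

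The main obstacle is the first step: justifying that a finite index subgroup of $A$ genuinely preserves each factor and acts by a product of isometries. This requires the factor-permutation rigidity of $\Isom(\prod C_i)$ for CAT(0) cube complexes, which follows from uniqueness of the product decomposition into irreducible factors (each $C_i$ is a quasiline, hence irreducible) together with the fact that isometries permute the irreducible factors; I would cite the CAT(0) product decomposition results from~\cite{BridsonHaefliger}. Once the coordinate-wise structure is secured, the remaining work is the routine integral linear algebra sketched above.
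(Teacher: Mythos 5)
Your overall route is the same as the paper's: pass to the kernel of the permutation action $A \to S_p$ so that the factors are preserved, extract from each factor action a homomorphism to $\integers$ recording translation, and then do integral linear algebra to find a finite index sublattice spanned by elements supported on single coordinates. The paper implements the middle step by producing, for each $i$, a finite index subgroup $B_i$ translating along an invariant line $\ell_i \subset C_i$, setting $\hat A = \bigcap_i B_i$, and using cocompactness of the $\hat A$-action on $\prod_{i=1}^p \ell_i$ to see that the image of $\phi:\hat A \to \integers^p$ has finite index; your version with the kernels $K_i = \ker \rho_i$ and the diagonal map into $\prod_i A_0/K_i$ is the same computation in slightly different clothing, and your final elimination step (intersect a finite index sublattice of $\integers^p$ with each coordinate axis to get $m_i e_i$, then take preimages) is exactly what the paper does.

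The one step that does not hold as you justify it is the assertion that $\rho_i(A_0)$ ``acts properly on a quasiline, hence is virtually cyclic.'' Properness of the product action does not pass to the factor actions: once $p\geq 2$ the kernel $K_i$ has bounded orbits on $C_i$, so the factor action is never proper, and in general a finitely generated abelian group can act cocompactly and non-properly on $\reals$ with image $\integers^2$ (two rationally independent translations), which is not virtually cyclic. What rescues the claim in this setting is specific to cube complexes: $C_i$ is locally finite (being a factor of a complex admitting a proper cocompact action) and $A_0$ acts by cubical automorphisms, so translation lengths lie in a discrete subgroup of $\reals$ and the elliptic part of $\rho_i(A_0)$ has a bounded orbit, hence is finite by local finiteness; together these force $\rho_i(A_0)$ to be finite-by-$\integers$. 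Either supply that argument, or do as the paper does and obtain the infinite cyclic quotient from a $B_i$-invariant axis $\ell_i$ in $C_i$ furnished by the flat torus theorem. With that repair, the rest of your proof goes through and matches the paper's.
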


\begin{proof}
 The action of $A$ on $\prod_{i=1}^p C_i$ permutes the factors in the product, yielding a homomorphism $A \rightarrow S_p$ to the degree~$p$ symmetric group.
 Its kernel is a finite index subgroup $B \leqslant A$ such that the $B$-action on $\prod_{i=1}^p C_i$
 is the product of $B$-actions on the factors.
  For each $i$ there is a finite index subgroup $B_i\leqslant B$ that acts by translations on an invariant line $\ell_i \subset C_i$.
 Let $\hat A = \bigcap_{i=1}^p B_i$.
 Consider a homomorphism $\phi : \hat A \rightarrow \integers^p$ induced by the action of $\hat A$ on $\prod_{i=1}^p \ell_i$.
 Since $\hat A$ acts cocompactly on $\prod_{i=1}^p \ell_i$ we deduce that $[\integers^p : \phi(\hat A)]<\infty$.
 Therefore, there are $\hat{a}_i \in \hat A$ such that $\phi(\hat{a}_i) = (0, \ldots , 0 , m_i, 0 ,\ldots , 0)$, where $m_i \neq 0$ is the $i$-th entry.
\end{proof}

 We earlier defined the halfspaces $\overleftarrow{H}, \overrightarrow{H}$ associated to a hyperplane $H$ of $X$ to be the smallest subcomplexes containing the components $X - H$.
The \emph{small halfspaces} are the largest subcomplexes contained in the two components of $X-H$.
 Equivalently, the small halfspaces are the components of $X - N^o(H)$, where $N^o(H)$ is the union of open cubes intersecting $H$.
 Note that each small halfspace is convex as each component of $\boundary N^o(H)$ is convex.
 It is readily verified that a subcomplex of $X$ is convex if and only if it is the intersection of small halfspaces.

\begin{lem} \label{lem:convexSubproduct}
Let $\widetilde X = \prod \widetilde X_i$ where each $\widetilde X_i$ is a connected CAT(0) cube complex.
Then a convex subcomplex $\widetilde Y \subseteq \widetilde X$ is a product $\widetilde Y = \prod \widetilde Y_i$, where $\widetilde Y_i \subseteq \widetilde X_i$ is a convex subcomplex.
\end{lem}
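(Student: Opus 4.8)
The plan is to use the characterization of convex subcomplexes as intersections of small halfspaces, established in the paragraph immediately preceding the statement, together with the fact that hyperplanes in a product decompose according to the factors. First I would recall that every hyperplane $H$ of $\widetilde X = \prod \widetilde X_i$ is of the form $H = H_i \times \prod_{j \neq i} \widetilde X_j$ for a unique index $i$ and a unique hyperplane $H_i$ of $\widetilde X_i$; this is the standard product structure on hyperplanes in a product of cube complexes. Consequently the two small halfspaces of $H$ are precisely $\widetilde X_i^{\pm} \times \prod_{j \neq i} \widetilde X_j$, where $\widetilde X_i^{\pm}$ are the small halfspaces of $H_i$ in $\widetilde X_i$.

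Next I would write the convex subcomplex $\widetilde Y$ as an intersection of small halfspaces, say $\widetilde Y = \bigcap_{\alpha} \widetilde H_\alpha^{-}$, invoking the ``readily verified'' characterization from the preceding paragraph. Grouping the halfspaces according to which factor index $i$ they come from, the intersection splits: for each $i$ let $\widetilde Y_i \subseteq \widetilde X_i$ be the intersection of all the small halfspaces of $\widetilde X_i$ that arise from the halfspaces $\widetilde H_\alpha^-$ associated to index $i$. Since each $\widetilde H_\alpha^-$ is a halfspace in a single factor crossed with all the other factors, intersecting over all $\alpha$ yields exactly $\prod_i \widetilde Y_i$, and each $\widetilde Y_i$, being an intersection of small halfspaces, is convex in $\widetilde X_i$. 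This gives the desired product decomposition $\widetilde Y = \prod_i \widetilde Y_i$ with each factor convex.

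The main obstacle I anticipate is being careful about connectivity and nonemptiness bookkeeping rather than any deep difficulty. One must check that each $\widetilde Y_i$ is nonempty (which follows from $\widetilde Y$ being nonempty and projecting onto each factor) and that the combinatorial halfspace decomposition really does restrict cleanly to each factor, i.e.\ that a small halfspace of $\widetilde X$ coming from index $i$ imposes no constraint whatsoever on the coordinates in factors $j \neq i$. The latter is exactly the content of the product formula for hyperplanes, so the crux is verifying that formula and confirming that a subcomplex of a product is a product of its coordinate projections as soon as it is ``halfspace-defined'' factor-by-factor. A small amount of care is also needed because the $\widetilde X_i$ are assumed connected, which is what guarantees that the two components of $\widetilde X_i - N^o(H_i)$ genuinely give a well-defined pair of small halfspaces in each factor and that the projections behave as expected.
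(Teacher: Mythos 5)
Your argument is correct and is essentially the paper's own proof: both write the convex subcomplex as an intersection of small halfspaces, observe that each hyperplane of the product (and hence each small halfspace) comes from a unique factor, group the halfspaces by factor index, and read off the product decomposition. The only cosmetic difference is that the paper defines each $\widetilde Y_i$ directly as an intersection of small halfspaces of $\widetilde X$ associated to $\widetilde X_i$-hyperplanes rather than passing to the corresponding halfspaces inside the factor, which amounts to the same thing.
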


\begin{proof}
 Let $\widetilde Y$ be a convex subcomplex of $\widetilde X$.
 Each $1$-cube is the product of some $0$-cubes and a single $1$-cube in some factor $\widetilde X_i$.
 An $\widetilde X_i$ hyperplane is a hyperplane which is dual to a $1$-cube arising from a factor $\widetilde X_i$.
 Let $\widetilde Y_i$ be the intersection of all small halfspaces containing $\widetilde Y$ that are associated to $\widetilde X_i$ hyperplanes.
 Then it is immediate that $\widetilde Y = \prod \widetilde Y_i$.
\end{proof}

\begin{proof}[Proof of Theorem~\ref{thm:highestIntersectionSubgroups}]
 By Theorem~\ref{thm:cocompact cubical flat},
  $A$ acts properly and cocompactly on a convex subcomplex $\widetilde{Y} \cong \prod_{i=1}^p C_i \subseteq \widetilde{X}$.
  A halfspace is \emph{shallow} if it lies in a finite neighborhood of its hyperplane,
  and is \emph{deep} otherwise.
  By passing to a smallest nonempty convex $A$-invariant subcomplex of $\widetilde Y$, we may assume
  that no hyperplane in $\widetilde Y$ has both a shallow and a deep halfspace.
  The convex hull of an $A$-invariant $p$-flat $F \subseteq \widetilde Y$ has this property.
  Indeed, each hyperplane intersecting $F$ in a $(p-1)$-flat necessarily has a pair of deep halfspaces, and a hyperplane containing $F$ has two shallow halfspaces by cocompactness.
By Lemma~\ref{lem:actionDecomposition}, there is a finite index subgroup $\hat{A}=\prod_{i=1}^p \langle \hat a_i \rangle$ of $A$ such that $\langle \hat{a}_i \rangle$ acts cocompactly on $C_i$, and trivially on $C_j$ for $j \neq i$.
Let $S = \{ \hat{a}_1, \ldots, \hat{a}_p \}$.
 Similarly, $A'$ cocompactly stabilizes a convex subcomplex $\widetilde{Y}'\subseteq \widetilde{X}$ which has its own induced product decomposition, and there exists a corresponding finite index subgroup $\hat A' = \prod_{i=1}^{p'} \langle \hat a_i' \rangle$ that acts cocompactly on $\widetilde Y'$.

By Lemma~\ref{lem:r thickening} for each $r$ there exists a cubical $r$-thickening $(\widetilde Y')^{+r}$
containing $\neb_r(\widetilde Y')$ and $(\widetilde Y')^{+r}$ is convex and $\hat{A}'$-cocompact.
Choose $r$ so that $\widetilde Y \cap (\widetilde Y')^{+r} \neq \emptyset$ and note that $\widetilde Y \cap (\widetilde Y')^{+r}$ is also convex. Therefore, by Lemma~\ref{lem:convexSubproduct}, the intersection is a subproduct $\widetilde Y \cap (\widetilde Y')^{+r} \subseteq \prod D_i \subseteq \prod C_i$ where each $D_i\subset C_i$ is a convex subcomplex.
Thus each factor is either a quasiline, a quasiray, or a compact convex subcomplex.
Furthermore, the action of $\hat{A} \cap \hat{A}'$ on $\widetilde Y \cap (\widetilde Y')^{+r}$ is cocompact.
Indeed, the intersection $\widetilde Y \cap (\widetilde Y')^{+r}$ is the universal cover of a component of the fiber product of $\hat{A} \backslash \widetilde Y  \rightarrow G \backslash \widetilde{X}$ and $\hat{A}' \backslash (\widetilde Y')^{+r} \rightarrow G \backslash \widetilde X$.

For each $i$, if $D_i$ is a quasiline or compact then let $E_i=D_i$, and otherwise let $E_i$ be the compact, $\hat A \cap \hat{A}'$-invariant subcomplex contained in the intersection of all shallow halfspaces of $D_i$ that have deep complements.
 Note that $D_i$ is nonempty since by finite dimensionality, $D_i$ is the intersection of finitely many shallow halfspaces whose associated hyperplanes intersect, and thus the Helly property implies the intersection is nonempty.
Let $E=\prod E_i$.
 If $E_i$ is a quasiline, then $\stabilizer_{\hat{A}}(E_i) = \langle \hat a_1 , \ldots,  \hat a_i^{n_i}, \ldots \hat a_p \rangle$ for some $n_i > 0$ since $\hat A \cap \hat{A}'$ must act cocompactly on $E$.
 Otherwise, if $E_i$ is compact, then $\stabilizer_{\hat{A}}(E_i) = \langle \hat a_1 ,\ldots , \hat a_{i-1}, \hat a_{i+1}, \ldots, \hat a_p \rangle$.
 Let $S_o \subseteq S$ be the subset of $S$ such that $i \in S_o$ if $E_i$ is a quasiline.
 Therefore $\stabilizer_{\hat{A}}(E)$ acts cocompactly on $E$, is commensurable to the subgroup generated by $S_o$, and contains $\hat{A} \cap \hat{A}'$.

Assume now that $r$ is large enough that $(\widetilde Y)^{+r}\cap \widetilde Y'\neq \emptyset$ and
as before $(\widetilde Y)^{+r}\cap \widetilde Y'$ contains a convex subcomplex of the form $E'=\prod E_j'$ where each $E_j'$ is either a quasiline or compact, and $\stabilizer_{\hat{A}'}(E')$ acts cocompactly on $E'$ and contains $\hat{A} \cap \hat{A}'$.

Any quasiline in $E$ provides a bi-infinite sequence of nested hyperplanes.
Every hyperplane in this sequence intersects $\widetilde Y'$. Indeed, if some hyperplane in the sequence intersects $(\widetilde Y')^{+r}$ but does not intersect $\widetilde Y'$,
then one side of the sequence would yield hyperplanes arbitrarily far from $\widetilde Y'$,
 and this contradicts that $(\widetilde Y')^{+r}$ lies within a uniform distance of $\widetilde Y'$.
 We deduce that this quasiline corresponds to an entire quasiline of $\widetilde Y'$
 and thus a quasiline of the subproduct $E'$.

  Let $E''$ denote
the subcomplex of $(\widetilde Y)^{+r} \cap (\widetilde Y')^{+r}$ obtained by intersecting it
 with all halfspaces that contain $E\cup E'$.
We now show that  $E'' \subset \neb_s(E)$ and  $E''\subset \neb_s(E')$ for some $s>0$.
Indeed, suppose  $E''\not\subset \neb_s(E)$ for each $s\geq 0$.
Then for each $s$, there is a length~$s$ geodesic $\gamma_s$ in $E''$ that starts at a $0$-cube of $E$, and such that no hyperplane of $E$ intersects $\gamma_s$. Let $\{H_{si}\}_{i=1}^s$ denote the sequence of
hyperplanes dual to $\gamma_s$ and let $\overrightarrow H_{si}$ denote the halfspaces containing $E$.
By definition of $E''$, each  $H_{si}$ either intersects $E'$ or separates $E,E'$.
Note that the number of hyperplanes separating $E, E'$ equals $\dist(E,E')$.
Thus for each $s$, all but $\dist(E,E')$ of the hyperplanes in $\{H_{si}\}_{i=1}^s$ intersect $E'$.
By finite dimensionality there is an upper bound on the number of pairwise crossing hyperplanes,
and so by Ramsey's theorem, for each $t$ there exists $S(t)$,
such that $\gamma_s$ is crossed by $t$ pairwise disjoint hyperplanes whenever $s\geq S(t)$.
We thus obtain arbitrarily long subsequence of hyperplanes that all intersect one of the finitely many factors of $E'=\prod D'_i$.
Since the factors of $E'$ are either finite or quasilines, we see that such a subsequence belongs to a quasiline of $E'$. Thus it belonged to a quasiline of $E$, as explained earlier.
But all hyperplanes of a quasiline of $E$ must cross $E$, which contradicts that no $H_{si}$ crosses $E$.

We now show that  $B=\stabilizer_{\hat{A}}( E)$ and $B'=\stabilizer_{\hat{A}'}( E')$
 are commensurable within $G$. We have already shown that $ E'$ and $ E$ are coarsely equal,
 since each is coarsely equal to $E''$. Let $\Upsilon$ denote the Cayley graph of $G$ with respect
  to a finite generating set. A $G$-equivariant map $\Upsilon\rightarrow \widetilde X$ shows that  $B,B'$ lie within finite neighborhoods of each other within $\Upsilon$.
The right action of $B$ thus stabilizes a finite collection of right cosets of $B'$, and so $B,B'$ are commensurable.

Let $H = B \cap B'$ which is a finite index subgroup of both $B$ and $B'$.
As $\hat{A}\cap \hat{A}'\leqslant B\leqslant \hat{A}$ and $\hat{A}\cap \hat{A}'\leqslant B' \leqslant \hat{A}'$,
we have $H = \hat{A} \cap \hat{A}'$.
Thus $\hat{A}\cap \hat{A}'$ is a finite index subgroup of $B$, hence acts cocompactly on $E$.
The claim then follows from the fact that $B$ is commensurable with a subgroup generated by $S_o$, and that $A \cap A'$ is commensurable to $\hat{A} \cap \hat{A}'$.
\end{proof}

A $\integers^p$ subgroup with a chosen product structure has $p \choose q$ distinct commensurability
classes of $\integers^q$ factor subgroups. We thus have the following corollary to Theorem~\ref{thm:highestIntersectionSubgroups}:
\begin{cor}\label{cor:too many intersection directions}
Suppose $G$ contains a highest free-abelian subgroup $A\cong \integers^p$.
Suppose there are ${p\choose k} +1$ other highest free-abelian subgroups $A_1,\ldots, A_{{p\choose k} +1}$ such that
the subgroups $A\cap A_i$ are pairwise non-commensurable and isomorphic to $\integers^k$.
Then $G$ cannot  act properly and cocompactly on a CAT(0) cube complex.
\end{cor}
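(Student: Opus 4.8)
The plan is to argue by contradiction, using Theorem~\ref{thm:highestIntersectionSubgroups} to reduce the statement to a finite combinatorial count. So suppose, for contradiction, that $G$ acts properly and cocompactly on a CAT(0) cube complex $\widetilde X$, and apply Theorem~\ref{thm:highestIntersectionSubgroups} to the highest free-abelian subgroup $A \cong \integers^p$. This furnishes a set $S = \{\hat a_1,\ldots,\hat a_p\} \subseteq A$ which, by its origin in Lemma~\ref{lem:actionDecomposition}, is a basis of a finite index free-abelian subgroup $\hat A \leqslant A$, and which has the property that for every highest free-abelian subgroup $A' \leqslant G$ the intersection $A' \cap A$ is commensurable to the subgroup $\langle T\rangle$ generated by some subset $T \subseteq S$.

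Applying this to each of the $A_i$, I would record for each $i$ a subset $T_i \subseteq S$ with $A \cap A_i$ commensurable to $\langle T_i \rangle$. Since $S$ is a basis of $\hat A$, the subgroup $\langle T_i\rangle$ is free-abelian of rank $|T_i|$; because commensurable subgroups of $\integers^p$ have equal rank and $A \cap A_i \cong \integers^k$ by hypothesis, this forces $|T_i| = k$. Hence each $T_i$ is one of the $\binom{p}{k}$ subsets of $S$ of cardinality exactly $k$.

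The concluding step is the pigeonhole principle: there are $\binom{p}{k} + 1$ indices $i$ but only $\binom{p}{k}$ admissible subsets, so two distinct indices $i \neq j$ satisfy $T_i = T_j$. Then $A \cap A_i$ and $A \cap A_j$ are each commensurable to the single subgroup $\langle T_i\rangle$. As commensurability of subgroups of the fixed group $A \cong \integers^p$ is an equivalence relation (two subgroups are commensurable precisely when they span the same rational subspace of $A \otimes \field{Q}$), it follows that $A \cap A_i$ and $A \cap A_j$ are commensurable, contradicting the hypothesis that the $A \cap A_i$ are pairwise non-commensurable. Therefore no such proper cocompact action on a CAT(0) cube complex can exist.

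I expect the only genuinely delicate point to be the bookkeeping around commensurability: one must confirm that the phrase ``commensurable to a subgroup generated by a subset of $S$'' correctly transports both the equal-rank property and transitivity, so that the counting argument closes. All of the geometric substance is already packaged into Theorem~\ref{thm:highestIntersectionSubgroups}, so once that theorem is invoked the remainder is a routine finite count, and I do not anticipate any substantive obstacle beyond this verification.
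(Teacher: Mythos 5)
Your proposal is correct and follows exactly the route the paper intends: the paper's entire justification is the sentence preceding the corollary, namely that a $\integers^p$ subgroup with a chosen product structure has only $\binom{p}{k}$ commensurability classes of rank-$k$ factor subgroups, so Theorem~\ref{thm:highestIntersectionSubgroups} plus pigeonhole gives the contradiction. Your write-up merely makes explicit the rank bookkeeping and the transitivity of commensurability that the authors leave implicit.
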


We now illustrate Corollary~\ref{cor:too many intersection directions} in a few situations.

\begin{exmp}\label{exmp:4 Z2 subgroups}
We describe an easy example of a group that acts properly on a finite dimensional CAT(0) cube complex
but does not have a finite index subgroup that acts properly and cocompactly on a CAT(0) cube complex.
Consider the group $G$ presented as follows:
$$ G = \langle a,b,r,s,t \mid [a,b], [a,r], [b,s], [ab,t]\rangle$$
Regard $G$ as a multiple HNN extension of $\langle a,b\rangle$ with stable letters $r,s,t$,
we see that $G$ is a ``tubular group'', and deduce that $G$ acts properly on a finite dimensional CAT(0) cube complex by utilizing the \emph{equitable set} $\{ a, b \}$  (see \cite{WiseGerstenRevisited} and \cite{WoodhouseTubularAction}).
However, $G$ does not have a finite index subgroup $G'$ that acts properly and cocompactly on a CAT(0) cube complex.
Indeed, consider the following highest free-abelian subgroups: $A=\langle a,b\rangle$, $R=\langle a,r\rangle$, $S=\langle b,s\rangle$ and $T=\langle ab, t\rangle$.
The intersections $R\cap A$, $S\cap A$, and $T\cap A$ are three pairwise non-commensurable cyclic subgroups of $A$, contradicting Corollary~\ref{cor:too many intersection directions}.

Note that $G$ is a central HNN extension of the 2-dimensional right-angled Artin group
$\langle a,b,r,s \mid [a,b], [a,r], [b,s]\rangle$, and so  the virtually compact
version of Theorem~\ref{thm:Central HNN virtually special} fails without the assumption that $H$ is highest.
\end{exmp}

\begin{exmp}\label{exmp:generic}
 Let $\big\{\langle b_1\rangle,\ldots,\langle b_r\rangle,\langle c_1\rangle,\ldots,\langle c_r\rangle \big\}$
 be a collection of  pairwise incommensurable infinite cyclic subgroups of $\integers^p$,
 and suppose that $r>\frac{p}{2}$.
Let $G$ be the following multiple HNN extension of $\integers^p =\langle a_1,\ldots, a_r\rangle$:
$$G = \langle a_1,\ldots, a_p, t_1,\ldots, t_r \ \mid \  [a_i,a_j]=1, b_k^{t_k}=c_k : 1\leq k \leq r\rangle$$
Then $G$ does not contain a finite index subgroup that acts properly and cocompactly on a CAT(0) cube complex.
Indeed, the subgroups $(\integers^p)^{t_i^{\pm1}}$ intersect $\integers^p$ in the various subgroups
 $\{\langle a_i\rangle, \langle b_i\rangle\}$ and so Corollary~\ref{cor:too many intersection directions} applies.
\end{exmp}

\section{Central HNN extensions of maximal free-abelian subgroups are special} \label{CentralizingExtensions}

This section presumes familiarity with the notions of specialness and canonical completion and retraction. We refer to \cite{HaglundWiseSpecial}.

\begin{lem}\label{lemma:CentralizingCompact Y}
Let $X$ be a virtually special cube complex.
Let $f:Y\rightarrow X$ be a local isometry where $Y$ is a compact nonpositively curved cube complex.
Let $(P, p)$ be a based graph.
Let $Z=\big( X \sqcup  (Y\times P) \big) / \big\{ (y,p)\sim f(y) : \forall y\in Y\big\}$.
Then $Z$ is virtually special.
Moreover, there is a finite special cover $\widehat Z\rightarrow Z$ such that the preimage of $X$ is connected.
\end{lem}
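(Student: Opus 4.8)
The plan is to first verify that $Z$ is nonpositively curved, then to manufacture the desired finite special cover by combining a canonical completion of $X$ with a suitably chosen cover of the graph $P$, and finally to read off the connectivity clause.

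\emph{That $Z$ is NPC.} The complex $Z$ is assembled by gluing $X$ to $Y\times P$ along the two copies of $Y$: the copy $f(Y)\subseteq X$, via the local isometry $f$, and the slice $Y\times\{p\}\subseteq Y\times P$, via the inclusion of a fibre of the first factor, which is again a local isometry. Gluing two NPC cube complexes along a common subcomplex that is locally isometrically embedded in each yields an NPC cube complex, since at every vertex of the gluing locus the amalgamated link remains flag. Hence $Z$ is NPC; in particular $f$ is $\pi_1$-injective and $Z$ realises the amalgam $\pi_1(X)\ast_{\pi_1(Y)}\bigl(\pi_1(Y)\times\pi_1(P)\bigr)$.

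\emph{Lifting $Y$ and the retraction.} Since $X$ is virtually special, I would pass to a finite special cover and then apply canonical completion and retraction \cite{HaglundWiseSpecial} to the local isometry $Y\to X$, obtaining a connected finite special cover $X'=\canon{Y}{X}\to X$ into which $Y$ lifts as an embedded locally convex subcomplex $\iota:Y\hookrightarrow X'$, together with a canonical retraction $r:X'\to Y$ satisfying $r\circ\iota=\mathrm{id}_Y$. Being a locally convex subcomplex of the special complex $X'$, the complex $Y$ is itself special. Next I would choose a finite simplicial cover $\widehat P\to P$ (with no loops and no bigons), so that in $Y\times\widehat P$ the vertical hyperplanes dual to the edges of $\widehat P$ --- each a copy of $Y$ --- are embedded, two-sided, and neither self-cross nor self-osculate.

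\emph{Building the cover.} Let $\widehat Z\to Z$ be the cover determined by $X'\to X$ together with $\widehat P\to P$: over $X$ it restricts to $X'$, over the tube it restricts to copies of $Y\times\widehat P$, and these tubes are attached to the finitely many elevations of $f(Y)$ in $X'$ in the pattern dictated by the deck action, the retraction $r$ being used to match the elevations with the vertices of $\widehat P$ lying over $p$. This is a finite cover of $Z$, and since $X'\to X$ is connected the preimage of $X$ in $\widehat Z$ is exactly the connected complex $X'$, which establishes the final clause.

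\emph{Specialness and the main obstacle.} It then remains to check the four hyperplane conditions of \cite{HaglundWiseSpecial} for $\widehat Z$. The hyperplanes fall into three families: those carried by $X'$, which behave well because $X'$ is special; the horizontal hyperplanes $H\times\widehat P$ for $H$ a hyperplane of $Y$; and the vertical hyperplanes $\cong Y$ dual to the edges of $\widehat P$. Conditions internal to a single tube follow from specialness of $Y$ together with the choice of $\widehat P$. The main obstacle is the interface: ruling out inter-osculation between a hyperplane of $X'$ and a vertical hyperplane of a tube, and controlling the horizontal hyperplanes as they cross from the tube into $X'$. This is exactly where the retraction $r$ is essential, since it straightens $X'$ onto $Y$ compatibly with the product structure of $Y\times\widehat P$, forcing any hyperplane of $X'$ that meets $\iota(Y)$ to extend across the tube consistently so that it cannot osculate a vertical hyperplane it fails to cross. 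Verifying this compatibility is the technical heart of the argument, after which the remaining conditions are routine and $\widehat Z$ is special, proving $Z$ virtually special.
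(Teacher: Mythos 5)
Your overall strategy---pass to a special cover, use canonical completion and retraction to make the attaching locus of each tube ``tidy'' (embedded, with no interosculating hyperplane), attach the tubes, and check the hyperplane conditions---is the same as the paper's, and your identification of the interface between $X'$ and the vertical hyperplanes as the crux is correct. But there is a genuine gap in the middle step. You apply canonical completion once, to a single lift of $Y$, and obtain one embedded, retractible copy $\iota(Y)\subset X'$. However, any cover $\widehat Z\rightarrow Z$ restricting to $X'\rightarrow X$ over $X$ must carry a tube over \emph{every} elevation of $f:Y\rightarrow X$ to $X'$---you say as much when you attach tubes ``to the finitely many elevations of $f(Y)$ in $X'$''---and your construction controls only one of them. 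An uncontrolled elevation $Y_i'$ may fail to embed, and a hyperplane of $X'$ may interosculate with it, i.e.\ cross $Y_i'$ while also being dual to an edge outside $Y_i'$ with an endpoint in $Y_i'$; such a hyperplane then crosses the vertical hyperplanes of the tube attached along $Y_i'$ and also osculates them, so $\widehat Z$ is not special. This is exactly what the paper's proof is organized to prevent: it forms the canonical completion $\canon{Y_i'}{X'}$ of \emph{each} elevation, passes to a further finite regular cover $\widehat X$ of $X$ factoring through all of these completions, and uses that tidiness persists under passing to further covers, so that every elevation to $\widehat X$ is simultaneously tidy. Your argument needs this step; as written, your ``technical heart'' paragraph only rules out interosculation at the one preferred copy of $Y$.

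Two smaller points. The cover $\widehat P\rightarrow P$ is unnecessary: the paper attaches $\widehat Y_j\times P$ with $P$ itself, and the vertical hyperplanes $\widehat Y_j\times\{m_e\}$ cannot self-cross, be one-sided, or self-osculate even when $P$ has loops or multiple edges, since two distinct edges $\{y\}\times e$ and $\{y'\}\times e$ dual to such a hyperplane share a vertex only if $y=y'$. Keeping $P$ also makes it transparent that $\widehat Z\rightarrow Z$ is a covering map, whereas with a nontrivial $\widehat P$ you would additionally have to match the sheets of $\widehat P$ over $p$ with the sheets of the elevations, which you only gesture at via the retraction. Finally, the elevations of $Y$ to a finite cover of $X$ are in general finite covers of $Y$ rather than copies of $Y$, so the tubes are $\widehat Y_j\times P$ for those covers, not $Y\times P$.
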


\begin{proof}
 Let $X' \rightarrow X$ be a finite degree special cover of $X$.
Let $Y_i'\rightarrow X'$ be the finitely many elevations of $Y\rightarrow X$.
For each $i$, let $\canon{Y_i'}{X'}$ be the canonical completion of $Y_i' \rightarrow X'$
and identify $Y_i'$ with its image in $\canon{Y_i'}{X'}$.
The canonical retraction $\canon{Y_i'}{X'}\rightarrow Y_i'$ ensures that the maps $Y_i'\rightarrow \canon{Y_i'}{X'}$ are \emph{tidy}
in the sense that they are injective and that no hyperplane $U$ in $\canon{Y_i'}{X'}$ \emph{interosculates} with  $Y_i'$
in the sense that $U$ is dual to an edge in $Y_i'$ and is also dual to an edge that is not in $Y_i'$  but  has an endpoint in $Y_i'$.
Suppose that a hyperplane $U$ intersecting $Y_i'$ were dual to an edge $e$ not in $Y_i'$, but adjacent to a vertex $v \in Y_i'$.
Then $v$ must be adjacent to another edge $e'$ in $Y_i'$ dual to $U$ since the retraction sends hyperplanes to hyperplanes.
This implies a contradiction since the retraction must preserve the orientations of the dual edges, but $U$ cannot self-osculate.

Let $\widehat X$ be a finite degree regular cover of $X$ that factors through each $\canon{Y_i'}{X'}$.
Observe that now all elevations of $Y$ to $\widehat X$ are tidy, since tidiness is stable under covers.
Finally, for each elevation $\widehat Y_j \hookrightarrow \widehat X$ of $Y\rightarrow X$, we adjoin a copy of $\widehat Y_j\times P$.
We thus obtain a cover $\widehat Z \rightarrow Z$.
The specialness of $\widehat Z$ holds due to the tidy embeddings and a case-by-case analysis of its hyperplanes: each hyperplane $W \subset \widehat X$ has $(\widehat Y_j \cap W) \times P$ attached for each $\widehat Y_j$.
Therefore no self-crossings, 1-sided hyperplanes, and no self-osculations are introduced.
The tidiness of each $\canon{\widehat Y_j}{\widehat X}$ guarantees that the new hyperplanes dual to the $P$ factors cannot interosculate with any hyperplane in $\widehat X$, as each elevation factors through some $Y_i'$.
\end{proof}

\begin{rem} Lemma~\ref{lemma:CentralizingCompact Y} can be generalised from the case where $P$ is a graph, to the case where $P$ is a special cube complex.
\end{rem}

We will need the following technical result about right-angled Artin groups.
A subgroup $A \leq G$ is \emph{isolated} if $g^p \in A$ implies that $g \in A$ for some $p \in \mathbb{Z}$.

\begin{lem} \label{lem:isolation_in_raags}
 Let $M$ be an abelian subgroup of a right-angled Artin group $R$.
  Suppose that $M$ is not properly contained in another abelian subgroup, then $M$ is isolated.
\end{lem}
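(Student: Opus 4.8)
The plan is to reduce the statement to the single fact that, in a right-angled Artin group, passing to a nonzero power never enlarges a centralizer, i.e. $C(g^n)=C(g)$ for every $g\in R$ and every $n\neq 0$; granting this, isolation of a maximal abelian subgroup is essentially formal. Concretely, suppose $g^p\in M$ with $p\neq 0$ and aim to show $g\in M$. If $g^p=1$ then $g=1$, since $R$ is torsion-free, so assume $g\neq1$. Because $M$ is abelian, every $m\in M$ commutes with $g^p$, that is $M\subseteq C(g^p)$. Using $C(g^p)=C(g)$ gives $M\subseteq C(g)$, so $g$ commutes with each element of $M$. As $M$ is itself abelian, the subgroup $\langle M,g\rangle$ is generated by pairwise commuting elements, hence is abelian and contains $M$. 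Since $M$ is not properly contained in any abelian subgroup, we conclude $g\in M$, which is precisely isolation.

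The crux is thus the power-invariance of centralizers, and this is where the right-angled Artin structure enters. The inclusion $C(g)\subseteq C(g^n)$ is trivial, so only $C(g^n)\subseteq C(g)$ needs proof. I would first conjugate so that $g$ is cyclically reduced (this preserves both hypothesis and conclusion), and then invoke Servatius's Centralizer Theorem \cite{Servatius}, which computes $C(g)$ from the \emph{support} $S=\mathrm{supp}(g)$ and the \emph{root} of $g$. One argues by cases on the full subgraph $\Gamma[S]$: if $S$ spans a clique then $g\in A_S\cong\integers^{|S|}$ and $C(g)=A_{\mathrm{St}(S)}$; if $\Gamma[S]$ is a nontrivial join $S_1* S_2$ then $g$ factors as a product of commuting elements supported on the two sides and one recurses on the factors; and if $\Gamma[S]$ is connected and not a join, then writing $g=r^m$ with $r$ the cyclically reduced primitive root, Servatius gives $C(g)=\langle r\rangle\times A_{\mathrm{lk}(S)}$. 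In every case $C(g)$ depends only on the support and the root of $g$. Since $g^n$ is again cyclically reduced with $\mathrm{supp}(g^n)=\mathrm{supp}(g)$ and the same root $r$, we obtain $C(g^n)=C(g)$, which closes the argument.

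The main obstacle is precisely establishing $C(g^n)=C(g)$, and it is worth stressing that this cannot be obtained by soft CAT(0) geometry alone: for a general proper cocompact action the minset of $g^n$ can be strictly larger than that of $g$, as the diagonal glide-reflection example earlier in the paper illustrates (there $\min(g)$ is a line while $\min(g^2)$ is the whole plane). Thus one genuinely needs the rigidity of the Salvetti action encoded in Servatius's theorem rather than merely the flat torus theorem (Proposition~\ref{prop:FPT}). Once the centralizer equality is in hand, the remaining bookkeeping---torsion-freeness to dispose of the case $g^p=1$, and maximality of $M$ to upgrade ``$g$ centralizes $M$'' to ``$g\in M$''---is routine.
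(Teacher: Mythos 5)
Your proof is correct, and its formal endgame---from $M\subseteq C(g^p)$ and $C(g^p)=C(g)$ to ``$\langle M,g\rangle$ is abelian, hence $g\in M$ by maximality''---is exactly the reduction the paper performs. Where you genuinely diverge is in the key commutation fact. The paper does not compute centralizers at all: it invokes the biorderability of right-angled Artin groups (Duchamp--Thibon) and observes that in any biorderable group $[g^p,h]=1$ forces $[g,h]=1$, since if $ghg^{-1}>h$ then conjugation by $g$, being order-preserving, gives $g^nhg^{-n}>h$ for all $n>0$, so no power of $g$ can commute with $h$. That is a two-line soft argument whose only RAAG-specific input is the existence of a bi-invariant order. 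You instead establish the stronger statement $C(g^n)=C(g)$ via Servatius's Centralizer Theorem, which requires cyclic reduction, the join decomposition of the support, and primitive roots. Your route is valid (the centralizer of a cyclically reduced element does depend only on its support and root, and both are unchanged by taking powers), but it imports a substantially heavier structural theorem to prove something that only needs the implication $[g^p,h]=1\Rightarrow[g,h]=1$ rather than the full centralizer. Your observation that soft CAT(0) geometry cannot give power-invariance of minsets is apt; the paper sidesteps that obstacle by using orderability rather than geometry. One housekeeping point: the reference to Servatius is not in the paper's bibliography, so you would need to add it, whereas the biorderability reference is already present.
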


\begin{proof}
 Right-angled Artin groups are biorderable \cite{DuchampThibon92}, therefore if $[g^p, h] = 1$ then $[g,h] =1$.
Indeed, if $ghg^{-1}>h$ then $(ghg^{-1})^n > h^n$ for all $n$, and likewise for $ghg^{-1}<h$.
We conclude that by maximality of $M$, if $g^p \in M$, then $g \in M$.
\end{proof}

\begin{cor} \label{cor:maxRankImpliesHighest}
 If $M$ is a maximal rank abelian subgroup of a right angled Artin group $R$, then it is a highest subgroup of $R$.
\end{cor}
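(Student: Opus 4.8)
The plan is to argue by contradiction, reusing the biorderability consequence extracted in the proof of Lemma~\ref{lem:isolation_in_raags}: since $R$ is biorderable, $[g^m,h]=1$ forces $[g,h]=1$. Suppose $M$ is not highest. Then there is a finite index subgroup $M_0\leqslant M$ contained in a virtually abelian subgroup $B\leqslant R$ with $\rank(B)>\rank(M)$. The goal is to manufacture an abelian subgroup of rank strictly greater than $\rank(M)$ that contains $M$, contradicting maximality of rank.

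The first step is to promote $B$ from virtually abelian to genuinely abelian. Let $B_1\leqslant B$ be the normal core of a finite index free-abelian subgroup of $B$; then $B_1$ is free-abelian, normal in $B$, of finite index, and of rank $\rank(B)$. For each $b\in B$ some power $b^m$ lies in $B_1$ and hence commutes with all of $B_1$; the biorderability property then upgrades this to $[b,z]=1$ for every $z\in B_1$, so $B_1\leqslant Z(B)$. Applying the same observation once more, for any $b,b'\in B$ a power $b^m$ lies in $B_1\leqslant Z(B)$, whence $[b^m,b']=1$ and therefore $[b,b']=1$. Thus $B$ is abelian of rank $\rank(B)$.

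Next I would show that $M$ centralizes $B$. Given $h\in M$, a suitable power $h^k$ lies in $M_0\leqslant B$, so $[b,h^k]=1$ for every $b\in B$ because $B$ is now abelian; biorderability upgrades this to $[b,h]=1$. Hence every element of $B$ commutes with all of $M$, and since both $M$ and $B$ are abelian, the subgroup $\langle M,B\rangle$ is abelian. It contains $B$, so its rank is at least $\rank(B)>\rank(M)$, and it contains $M$. This exhibits an abelian subgroup of strictly larger rank containing $M$, contradicting that $M$ has maximal rank. Therefore $M$ is highest.

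The one substantive step is the first, passing from virtually abelian to abelian; the rest is a direct application of the biorderability property together with routine bookkeeping with the normal core, which I do not expect to cause difficulty.
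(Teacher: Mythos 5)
Your proof is correct, and it takes a genuinely different route from the paper. The paper's own proof is a two-line reduction to Lemma~\ref{lem:isolation_in_raags}: if $M$ is virtually contained in a higher-rank subgroup, one extracts an element with a power in $M$ that does not itself lie in $M$, contradicting isolation. That argument implicitly requires $M$ to be maximal as an abelian subgroup (the hypothesis of Lemma~\ref{lem:isolation_in_raags}), whereas the corollary only assumes maximal \emph{rank}; a maximal-rank abelian subgroup need not be isolated (e.g.\ $2\integers\times\integers$ inside $\integers^2$), and the paper's phrasing ``$g\in M-M'$ with $g^p\in M'$'' reads like a typo in any case. You sidestep all of this by going back to the biorderability fact underlying the lemma rather than to the lemma's statement: you first upgrade the virtually abelian overgroup $B$ to an honest abelian group (power into the normal core, then cancel the power twice via biorderability), then show $M$ centralizes $B$ by the same mechanism, and finally exhibit the abelian group $\langle M,B\rangle$ of rank at least $\rank(B)>\rank(M)$, contradicting rank-maximality directly. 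Your version is longer but self-contained, uses only the hypothesis actually stated in the corollary, and handles the ``virtually'' in the definition of highest explicitly; the paper's version is shorter but leans on a maximality-under-inclusion hypothesis it has not verified. All the individual steps you use check out: the normal core is finite index, free abelian, and of full rank; some positive power of each element lands in it; and the implication $[x^m,y]=1\Rightarrow[x,y]=1$ is exactly the biorderability consequence proved inside Lemma~\ref{lem:isolation_in_raags}.
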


\begin{proof}
If $M$ is virtually contained in a higher rank subgroup $M'$ of $R$, then there exists $g \in M - M'$ with $g^p \in M'$.
 This contradicts the isolation of $M$, by Lemma~\ref{lem:isolation_in_raags}.
\end{proof}

\begin{figure}


  %\begin{overpic}[width=.7\textwidth,grid,tics=5] {diagram4}
  \begin{overpic}[width=.7\textwidth,tics=10]{diagram4}
   \put(27,5){$X'$}
   \put(79,9){$R$}
   \put(-2,29){$Y_1' \times P$}
   \put(24,31){$Y_2' \times P$}
   \put(64,29){$F_1' \times P$}
   \put(82,30){$F_2' \times P$}
  \end{overpic}
  \caption{\label{diag:Another_Diagram}The local isometry $Z \rightarrow S$.}
\end{figure}

\begin{thm}\label{thm:Central HNN virtually special}
Let $H$ be a finitely generated virtually $[$compact$]$ special group.
Let $A\subset H$ be a highest abelian subgroup.
Let $G=H*_{A^t=A}$ be the HNN extension, where $t$ is the stable letter commuting with $A$,
then $G$ is virtually $[$compact$]$ special.
\end{thm}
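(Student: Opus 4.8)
The plan is to realise $G$ as the fundamental group of a nonpositively curved cube complex built by gluing a single \emph{tube} onto a cube complex for $H$, and then to certify virtual $[$compact$]$ specialness with Lemma~\ref{lemma:CentralizingCompact Y}. Let $X$ be a virtually $[$compact$]$ special cube complex with $\pi_1 X\cong H$, let $P$ be the circle viewed as the graph with a single vertex $p$ and a single edge, and suppose we are given a local isometry $f\colon Y\to X$ from a \emph{compact} nonpositively curved cube complex with $f_*\pi_1 Y=A$. Writing $Z=\big(X\sqcup(Y\times P)\big)/\{(y,p)\sim f(y)\}$, the Seifert--van Kampen theorem gives $\pi_1 Z\cong \pi_1 X*_{\pi_1 Y}\pi_1(Y\times P)\cong H*_A(A\times\integers)$, and since the $\integers$-factor is a stable letter commuting with $A$ this is precisely $G$. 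Lemma~\ref{lemma:CentralizingCompact Y}, applied with this $P$, then shows $Z$ is virtually special, and when $X$ is compact so is $Z$; in either case $G=\pi_1 Z$ inherits the conclusion. (I pass freely to torsion-free finite-index subgroups of $A$ and to finite covers throughout, which is harmless as the statement is only up to commensurability.)

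The whole problem is thus to manufacture the edge map $f$, equivalently to show that $A$ acts properly and cocompactly on a convex subcomplex $\widetilde Y\subseteq\widetilde X$ that is a product of quasilines; then $Y=A\backslash\widetilde Y$ and $f$ is induced by the inclusion $\widetilde Y\hookrightarrow\widetilde X$. When $H$ is virtually \emph{compact} special, $H$ acts properly and cocompactly on $\widetilde X$, and since $A$ is highest the Cubical Flat Torus Theorem~\ref{thm:cocompact cubical flat} supplies exactly such a $\widetilde Y\cong\prod_{i=1}^p C_i$. This settles the compact case, and notably uses no bounded packing.

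When $H$ is only virtually special, $\widetilde X$ need not be $H$-cocompact, so I would route through a right-angled Artin group. By \cite{HaglundWiseSpecial} there is a local isometry $X\to\mathrm{Sal}(R)$ into the Salvetti complex of a RAAG $R$, so $\widetilde X$ sits as a convex subcomplex of the CAT(0) cube complex $\widetilde R$ on which $R$ acts properly and cocompactly, with $A\leqslant H\leqslant R$. Lemma~\ref{lem:isolation_in_raags} and Corollary~\ref{cor:maxRankImpliesHighest} ensure that abelian subgroups of $R$ are isolated and that maximal-rank ones are highest, so that $A$ is commensurable with $A_1\cap H$ for a highest abelian $A_1\leqslant R$; Theorem~\ref{thm:cocompact cubical flat} applied to $R$ makes $A_1$ act properly and cocompactly on a convex subcomplex $\widetilde F=\prod_i C_i'\subseteq\widetilde R$ that is a product of quasilines. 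Applying the Flat Torus Theorem~\ref{prop:FPT} to the $A$-action on $\widetilde X$ yields an $A$-cocompact flat $E\subseteq\widetilde X$, and Theorem~\ref{thm:hullTheorem} leaves two possibilities: either $\hull(E)\cong\prod_{i=1}^p C_i$ is $A$-cocompact --- giving the required $\widetilde Y$, with $\widetilde Y\subseteq\widetilde X\cap\widetilde F$ --- or some finite-index $B\leqslant A$ has $\min(B)\cap\hull(E)$ non-cocompact.

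Excluding this second alternative is the crux, and the step I expect to be hardest: without the cocompactness of $H$ one cannot run the pigeonhole argument of Theorem~\ref{thm:cocompact cubical flat} inside $\widetilde X$. Here I would exploit the cocompactness of $R$ on $\widetilde R$ together with the bounded packing of the abelian subgroup $A$ in $R$ furnished by Theorem~\ref{thm:bounded packing abelian cubical}: a non-cocompact $\min(B)\cap\hull(E)$ would force the $A$-orbit to spread across extra quasiline factors of $\widetilde F$, and I expect this spreading to be incompatible with the packing of the $A$-cosets inside the cocompactly cubulated $R$. This is exactly the point at which bounded packing replaces the relative-hyperbolicity hypothesis of the earlier version in \cite{WiseIsraelHierarchy}. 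Once the alternative is excluded, $\widetilde Y=\hull(E)$ is the desired cocompact convex product of quasilines, $Y\to X$ is built, and the tube on $X$ maps compatibly to a tube glued along the flat $A_1\backslash\widetilde F$ of $\mathrm{Sal}(R)$, realising the local isometry $Z\to S$ of Figure~\ref{diag:Another_Diagram} to the virtually special RAAG-side complex $S$ and completing the proof.
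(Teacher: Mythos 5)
Your compact case is essentially the paper's argument and is sound modulo bookkeeping: since $H$ may have torsion you must replace $H$ by a finite-index $H'=\pi_1X'$, and the induced splitting of the corresponding finite-index subgroup of $G$ has one edge group $A_i=\pi_1X'\cap g_i^{-1}Ag_i$ for each double coset $Ag\pi_1X'$, so you need several tubes $Y_i'\times P$ (with $P$ a suitable finite based graph, not just the circle), not a single one; each $A_i$ is still highest in $H'$, so Theorem~\ref{thm:cocompact cubical flat} applies to each, and your single-tube computation of $\pi_1Z$ does not by itself produce a finite-index subgroup of $G$.

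The genuine gap is exactly where you flagged it, in the non-compact case. Your architecture forces you to produce a \emph{compact} $Y$ with a local isometry to $X$ and $\pi_1Y=A$, because Lemma~\ref{lemma:CentralizingCompact Y} requires the attached complex to be compact; equivalently, you must rule out the alternative of Theorem~\ref{thm:hullTheorem} in which $\min(B)\cap\hull(E)$ fails to be $B$-cocompact. The step that rules this out in the proof of Theorem~\ref{thm:cocompact cubical flat} is a pigeonhole argument (finitely many $G$-orbits of the subcomplexes $N(\{v\}\times F)$) that needs cocompactness of the ambient action on $\widetilde X$, which you do not have, and your proposed substitute via bounded packing of $A$ in $R$ is a hope rather than an argument. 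The paper does not close this gap --- it avoids it. It never claims that $A_i$ is convex-cocompact in $\widetilde X'$: the tubes in $Z$ are attached along the fiber products $Y_i'$ of $X'\rightarrow R$ and $F_i\rightarrow R$, which are permitted to be non-compact, and virtual specialness of $Z$ is certified by constructing a local isometry $Z\rightarrow S$, where $S=R\cup\bigcup(F_i\times P)$ is built over the compact Salvetti complex from the \emph{compact} convex cores $F_i$ of highest free-abelian subgroups $B_i\leqslant\pi_1R$ with $\pi_1X'\cap B_i=A_i$, supplied by Theorem~\ref{thm:cocompact cubical flat} applied to the RAAG (which is properly and cocompactly cubulated). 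Lemma~\ref{lemma:CentralizingCompact Y} is then applied to $S$, where compactness of the attached pieces is automatic, and specialness pulls back to $Z$ along the local isometry. In short, the compactness your plan is missing is manufactured inside $R$, not inside $X$; you should restructure the non-compact case this way rather than attempt to prove convex-cocompactness of $A$ in $\widetilde X$.
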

\begin{proof}
Let $X'$ be a [compact] nonpositively curved special cube complex such that $\pi_1X'$ is isomorphic to a finite index subgroup of $H$.
We may assume that $X'$ has finitely many hyperplanes since $H$ is finitely generated.
 Consider the local isometry to the associated Salvetti complex $X' \looparrowright R$,
 and note that $R=R(X')$ is compact since $X'$ has finitely many hyperplanes.

 Let $\{g_i \}$ be a finite set of representatives of the double cosets $\{Ag\pi_1X'\}$.
 Let $\{A_i\}$ be the finitely many distinct intersections $\pi_1X'\cap g_i^{-1}Ag_i$.
 Each $A_i$ is highest in $H'$, since $A$ is highest in $H$.
 The subgroup $A_i \hookrightarrow \pi_1R$ is contained in a maximal free-abelian group $\dot B_i \leqslant \pi_1R$, which is highest in $\pi_1R$ by Corollary~\ref{cor:maxRankImpliesHighest}.
As $A_i$ is highest in $H'$ we have $[H'\cap \dot{B}_i:A_i]<\infty$.
The quotient $p_i: \dot B_i / A_i \cong T\oplus \integers^m$ where $|T|<\infty$.
The finite index subgroup $B_i=p_i^{-1}(\integers^m)$ of $\dot B_i$ is still highest in $\pi_1R$ and has the additional property that $H' \cap B_i = A_i$.

By Theorem~\ref{thm:cocompact cubical flat}, for each $i$ there exists a local isometry $F_i\rightarrow R$ with $F_i$ a compact nonpositively curved cube complex,
such that $\pi_1F_i$ maps to $B_i$.
For each $i$, let $Y_i'\rightarrow R$ be the fiber-product of $X'\rightarrow R$ and $F_i\rightarrow R$.
 Note that by possibly replacing $F_i$ with a sufficient convex finite thickening as provided by Lemma~\ref{lem:r thickening}, we can assume that $Y_i'$ is nonempty, so that $\pi_1Y_i'=A_i$.
Let $Z = X'\cup \bigcup(Y_i'\times P) \slash \sim$. Note that $\pi_1Z$ is isomorphic to a finite index subgroup of $G$ since the graph of groups for $\pi_1 Z$ covers the graph of groups of $G$.
Let $S= R\cup \bigcup(F_i\times P) \slash \sim$ be the space obtained from $R$ by attaching the various $F_i\times P$ along $F_i\times \{a\}$ using the map $F_i\rightarrow R$.
See Figure~\ref{diag:Another_Diagram}.

A multiple use of Lemma~\ref{lemma:CentralizingCompact Y} shows that $S$ is virtually special.
There is a local isometry $Z\rightarrow S$ given by the local isometry of $X'$ into $R$ extended along the local isometry $Y_i' \times P \rightarrow F_i \times P$, and hence $Z$ is virtually special.
\end{proof}

\bibliographystyle{alpha}
\bibliography{TPReferences}

\end{document}